\newtheorem{theorem}{Theorem}[section]
\newtheorem{lemma}[theorem]{Lemma}
\newtheorem{proposition}[theorem]{Proposition}
\newtheorem{corollary}[theorem]{Corollary}
\theoremstyle{definition}
 \newtheorem{definition}[theorem]{Definition}
\newtheorem{example}[theorem]{Example}
\newtheorem{remark}[theorem]{Remark}
\theoremstyle{remark}
\numberwithin{equation}{section}
\begin{document}

\title[Quasidiagonal traces]{ Quasidiagonal  traces and crossed products}


\author[Marzieh Forough ]{Marzieh Forough }
\address{ School of Mathematics, Institute for Research in Fundamental Sciences (IPM), P.O. Box 19395-5746, Tehran, Iran}
\email{mforough@ipm.ir, mforough86@gmail.com}

\subjclass[2010]{Primary 46L05, Secondary 46L55, 16S35.}

\keywords{Crossed products; (uniform) quasidiagonal traces; weak tracial Rokhlin property; Rokhlin property.}
\begin{abstract}
Let $A$ be a simple, exact, separable, unital $C^*$-algebra and let $\alpha \colon G \rightarrow Aut(A)$ be an action of a finite group $G$ with the weak tracial Rokhlin property. We show that every trace on $A \rtimes_{\alpha} G$ is quasidiagonal provided that all traces on $A$ are  quasidiagonal. As an application, we study the behavior of finite decomposition rank under taking crossed products by finite group actions with the weak tracial Rokhlin property. Moreover, we discuss the stability of the property that all traces are quasidiagonal under taking crossed products of finite group actions with finite Rokhlin dimension with commuting towers.
\end{abstract}

\maketitle
\tableofcontents

\section{Introduction}
In this paper, we study the stability of the property that all traces on a $C^*$-algebra are quasidiagonal by  taking crossed product by a finite group action with some form of the Rokhlin property. In particular, we consider crossed products of finite group actions with the weak tracial Rokhlin property and with finite Rokhlin dimension with commuting towers.

The notion of quasidiagonality appeared in Halmos' work on operator theory. Then it was introduced in the theory of $C^*$-algebras: a $C^*$-algebra is quasidiagonal if it admits a faithful representation for which there is an approximately central net of finite rank projections converging strongly to the unit. Voiculescu proved that for a separable unital $C^*$-algebra $A$, quasidiagonality is equivalent to the existence of a sequence of unital completely positive maps from $A$ to a matrix algebra which are both asymptotically multiplicative and asymptotically isometric. In the same spirit, Brown introduced quasidiagonal traces and investigated their properties in \cite{Brown}. 
Recently, the property that all traces on a $C^*$-algebra are quasidiagonal has been investigated deeply regarding the classification program of $C^*$-algebras. In \cite{Brown-Sato}, the authors argued that the condition that all traces are quasidiagonal distinguishes decomposition rank from nuclear dimension. 
 Moreover, in \cite{Brown-Sato}, it was shown that all traces on simple, separable, unital $C^*$-algebras with finite decomposition rank are quasidiagonal. Tikuisis, Winter and White in \cite{Winter-White} have proved that every trace on a simple, separable, nuclear  $C^*$-algebra in the $UCT$ class is quasidiagonal. This result has significant consequences in the Elliott classification program, see \cite{Winter-White}. In other direction, it was proved by Rosenberg and Tikuisis-Winter-White that a discrete group is amenable if and only if the canonical trace on its reduced group $C^*$-algebra is quasidiagonal.  It has been shown by Elliott-Gong-Lin-Niu in \cite{EGLN} that quasidiagonality of all traces is fundamental in the classification of simple, nuclear, stably finite $C^*$-algebras.  
 
 These results suggest that it is important to determine when all traces are quasidiagonal.  In particular, we investigate when all traces  on a crossed product algebra by a finite group action are quasidiagonal. It has been proved that several properties appearing in Elliott's classification are stable by taking crossed products by finite group actions with some kind of Rokhlin property. This motivates us to investigate when all traces on a crossed product algebra by a finite group action with some type of Rokhlin property are quasidiagonal. 

The Rokhlin property for actions on $C^*$-algebras  was investigated by Kishimoto, Herman and Jones, Herman and Ocneanu. Izumi in \cite{Izu04} gave a modern definition of the Rokhlin property for finite group actions acting on unital  $C^*$-algebras and classified finite group actions on some classes of unital  $C^*$-algebras with the Rokhlin property.
 It has been studied extensively  which properties of $C^*$-algebras are preserved by taking crossed products of  group actions with the Rokhlin property, see \cite{Phillips-tracial}, \cite{Osaka-P}, \cite{Hirshberg}, \cite{Zacharias}, and \cite{Gardella}. 
 The Rokhlin property is quite restrictive. There are relatively few actions with the Rokhlin property and there are many algebras which admit no finite group action with the Rokhlin property. Phillips in \cite{Phillips-tracial}  introduced the tracial Rokhlin property for finite group actions that is less restrictive version of the Rokhlin property. There are many examples of actions of finite groups with the tracial Rokhlin property, see \cite{Phillips}.  Phillips in \cite{Phillips-tracial} proved that having tracial rank zero is stable under taking crossed products by finite group actions with the tracial Rokhlin property. 
 
  The tracial Rokhlin property still needs the existence of projections, which can be restrictive. For instance, the Jiang-Su algebra $\mathcal{Z}$ does not admit any finite group actions with the tracial Rokhlin property. Projection-free generalizations of the tracial Rokhlin property were considered in \cite{Sato-M}, \cite{Hirshberg-1}, and \cite{Gardella2}, among others.  In particular, preservation of (tracial) $\mathcal{Z}$-stability by taking crossed products by finite group actions was investigated in \cite{Hirshberg-1}.
  

Another approach to weakening the  Rokhlin property is given in \cite{Winter-Z}. The authors define and study Rokhlin dimension for finite group actions. The preservation of nuclear dimension, decomposition rank and $\mathcal{Z}$-stability is investigated by taking crossed products of finite group actions with finite Rokhlin dimension in \cite{Zacharias}. Continuation of this research line and motivated by the recent advances in the classification program of  $C^*$-algebra, we study the stability of the property that all traces are quasidiagonal by taking crossed products of finite group actions. The following is our main theorem. 
\begin{theorem}\label{main1}
Let $A$ be a simple, separable, exact,  unital $C^*$-algebra and let $\alpha \colon G \rightarrow Aut(A)$ be an action of a finite group $G$ with the weak tracial Rokhlin property.  If every trace on $A$ is quasidiagonal then all traces on $A \rtimes _{\alpha} G$ are quasidiagonal.
\end{theorem}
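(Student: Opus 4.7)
The approach is to transfer quasidiagonal approximations from $A$ to $A \rtimes_\alpha G$ using the tower structure supplied by the weak tracial Rokhlin property. The starting observation is that every tracial state $\tau$ on $A \rtimes_\alpha G$ restricts to a $G$-invariant tracial state $\sigma := \tau|_A$ on $A$, and (using simplicity of $A$ together with outerness coming from the weak tracial Rokhlin property) the restriction map is a bijection between $T(A \rtimes_\alpha G)$ and $T(A)^G$, with inverse $\sigma \mapsto \sigma \circ E$ for $E \colon A \rtimes_\alpha G \to A$ the canonical conditional expectation. By hypothesis $\sigma$ is quasidiagonal, so I fix u.c.p.\ maps $\psi_n \colon A \to M_{k(n)}$ which are asymptotically multiplicative and satisfy $\mathrm{tr}_{k(n)} \circ \psi_n \to \sigma$ pointwise.

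Given a finite $F \subset A \rtimes_\alpha G$---which I may assume is of the form $\{a u_g : a \in F_0,\, g \in G\}$ for a finite $F_0 \subset A$---and $\epsilon > 0$, the weak tracial Rokhlin property yields positive contractions $(e_g)_{g \in G} \subset A$ that are approximately pairwise orthogonal, approximately covariant in the sense $\alpha_g(e_h) \approx e_{gh}$, approximately commute with $F_0$, and whose sum $p := \sum_g e_g$ is tracially large, say $\sigma(1-p) < \epsilon$. These elements manufacture an approximate system of matrix units $v_{g,h} := e_g^{1/2} u_{gh^{-1}} e_h^{1/2}$ inside the corner $p (A \rtimes_\alpha G) p$, approximately realising that corner as $(e_e A e_e) \otimes M_{|G|}$. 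Combining this with the maps $\psi_n$ produces a u.c.p.\ map $\Phi_n \colon A \rtimes_\alpha G \to M_{k(n)} \otimes M_{|G|}$, defined so that on a generator $a u_g$ its image reflects $\psi_n(a)$ multiplied (through the approximate matrix units) by the regular representation $\lambda_g$ of $G$ on $\mathbb{C}^{|G|}$, plus a small rank-one correction on the complement of $p$ to ensure unitality.

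It remains to verify that $\Phi_n$ is asymptotically multiplicative on $F$---which follows from the asymptotic multiplicativity of $\psi_n$, the approximate Rokhlin identities, and the approximate commutation of $(e_g)$ with $F_0$---and that $(\mathrm{tr}_{k(n)} \otimes \mathrm{tr}_{|G|}) \circ \Phi_n(x) \to \tau(x)$ for $x \in F$, by direct computation using $\mathrm{tr} \circ \psi_n \to \sigma$, the Rokhlin relations, $\sigma(p) \approx 1$, and the fact that generators $u_g$ with $g \neq e$ are sent to off-diagonal matrix images and hence contribute zero trace (matching $\tau(a u_g) = \sigma(E(a u_g)) = 0$). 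The main obstacle is the simultaneous control of all error terms when setting up $\Phi_n$: since the $(e_g)$ are only approximate projections and only tracially cover $1$, the system $v_{g,h}$ is not an honest matrix-unit system, so the asymptotic multiplicativity of $\Phi_n$ must survive the compounded errors from $\psi_n$ and from the Rokhlin tower, while the tracial approximation must be maintained across the small complementary piece $(1-p)$. Exactness of $A$ is expected to enter in ensuring that the quasidiagonal data interact cleanly with the Rokhlin data at the level of the relevant matrix ultrapower, while simplicity underpins the identification of traces via $E$.
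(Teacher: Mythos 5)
Your route differs from the paper's: you try to build quasidiagonal approximations for $A\rtimes_\alpha G$ directly out of the Rokhlin towers, whereas the paper never touches matrix units at all --- it packages the towers into a c.p.c.\ order zero map $C(G)\to A_\omega\cap A'$, uses it to define an order zero map $\psi(a)=\sum_{g}\alpha_g(f_e^{1/2}af_e^{1/2})$ into $(A^\alpha)_\omega$ that agrees with the identity on $A^\alpha$ modulo the trace-kernel ideal, pulls traces on $A^\alpha$ back to traces on $A$ via $\tau_\omega\circ\psi$ (Winter--Zacharias), and then passes from $A^\alpha$ to $A\rtimes_\alpha G$ by Morita equivalence and the corner-cutting result for \emph{uniform} quasidiagonal traces. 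That detour is not cosmetic: it is precisely what lets the argument avoid the point where your sketch breaks down.

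The gap is in the construction and verification of $\Phi_n$. For the \emph{weak} tracial Rokhlin property the $f_g$ are merely pairwise orthogonal positive contractions, not projections, and this defeats your approximate matrix units twice over. First, $v_{g,h}v_{h,l}=f_g^{1/2}u_{gh^{-1}}f_h u_{hl^{-1}}f_l^{1/2}\approx f_g^{3/2}u_{gl^{-1}}f_l^{1/2}$, which is not close in norm to $v_{g,l}=f_g^{1/2}u_{gl^{-1}}f_l^{1/2}$ unless $f_g$ is an approximate projection --- and nothing in the hypothesis forces $f_g^2\approx f_g$. Second, the defect $1-\sum_g f_g$ is only small in the sense of Cuntz comparison (hence in trace); it is typically of norm one. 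Consequently the compression of $x$ into the tower is not norm-close to $x$, the error terms $px(1-p)yp$ in the multiplicativity estimate cannot be controlled in operator norm (they are only controlled in trace), and the ``small rank-one correction'' you append for unitality is a state on a norm-large piece, which is not even approximately multiplicative. Since Definition~\ref{qd1} requires asymptotic multiplicativity in \emph{operator norm} --- this is exactly what distinguishes quasidiagonal from amenable traces --- your $\Phi_n$ would at best witness amenability of $\tau$, not quasidiagonality. (Dropping unitality is permitted, since c.p.c.\ maps suffice, but that does not repair the other two problems.) Relatedly, exactness is not a background hygiene assumption: in the paper it is used, via Brown's theorem that quasidiagonal traces on exact algebras are uniform quasidiagonal, to make the trace on $A^\alpha$ cut down to the corner $p(M_n\otimes A^\alpha)p\cong A\rtimes_\alpha G$; your sketch never uses exactness in any identifiable way, which is a symptom of the missing step. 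For the honest tracial Rokhlin property (with genuine projections) a tower-based argument of your flavour can be pushed through --- the paper's Corollary~\ref{qd+tracial} handles that case by tracial approximation --- but for the weak version the order zero/fixed-point-algebra detour, or something equally structural, appears to be necessary.
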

Gardella, Hirshberg and Santiago in \cite{Gardella2} have studied the relation between the weak tracial Rokhlin property and having finite Rokhlin dimension with commuting towers. Their result together with Theorem \ref{main1} enables us to conclude some conditions under which all traces on a crossed product by a finite group action with finite Rokhlin dimension with commuting towers are quasidiagonal, see Corollary \ref{commuting towers}.

 As an application of Theorem \ref{main1}, we study finiteness of the decomposition rank of  crossed products by finite group actions with the tracial Rokhlin property. 
\begin{corollary} 
Let $A$ be a simple, separable, unital $C^*$-algebra with unique trace and let $\alpha \colon G \rightarrow Aut(A)$ be an action of a finite group $G$ with the tracial Rokhlin property. If  the decomposition rank of $A$ is finite, then the decomposition rank of $A \rtimes _{\alpha} G$ is finite.
\end{corollary}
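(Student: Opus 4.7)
The plan is to combine Theorem \ref{main1} with the standard structural consequences of the tracial Rokhlin property and with the converse direction of the Brown--Sato theorem. Throughout, the tracial Rokhlin property trivially implies the weak tracial Rokhlin property, so Theorem \ref{main1} will be directly applicable once I have verified that every trace on $A$ is quasidiagonal.

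First, I would read off what finite decomposition rank of $A$ gives us: $A$ is nuclear (hence exact), stably finite, and, by Winter's theorem, $\mathcal{Z}$-stable; moreover, by the Brown--Sato theorem \cite{Brown-Sato}, every trace on $A$ is quasidiagonal. Combining this with simplicity, separability, exactness and unitality of $A$, Theorem \ref{main1} applies and produces quasidiagonality of every trace on $A \rtimes_{\alpha} G$.

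Next, I would assemble the remaining structural properties of the crossed product. Simplicity of $A \rtimes_{\alpha} G$ follows from Phillips's work on actions with the tracial Rokhlin property \cite{Phillips-tracial}. Nuclearity is automatic because $A$ is nuclear and $G$ is finite. Stable finiteness is inherited from $A$ via the faithful canonical conditional expectation $A \rtimes_{\alpha} G \to A$. Finally, $\mathcal{Z}$-stability is obtained by invoking a Matui--Sato / Hirshberg--Orovitz type preservation result \cite{Sato-M, Hirshberg-1}; here the hypothesis that $A$ has a unique trace enters in an essential way, as it is needed to control the tracial behavior along the central sequence algebra used in those arguments.

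The argument is then closed by the converse direction of the Brown--Sato correspondence: a simple, separable, unital, nuclear, $\mathcal{Z}$-stable, stably finite $C^*$-algebra whose traces are all quasidiagonal has finite decomposition rank. Applied to $A \rtimes_{\alpha} G$, this delivers the conclusion. The main obstacle I anticipate is precisely the $\mathcal{Z}$-stability step of the previous paragraph: currently available preservation results for $\mathcal{Z}$-stability under the tracial Rokhlin property require control over the tracial simplex, which is exactly what the uniqueness-of-trace assumption on $A$ supplies, and this is why that hypothesis appears explicitly in the statement.
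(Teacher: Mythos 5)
Your overall strategy is the same as the paper's: use the forward direction of the Brown--Sato theorem on $A$, apply Theorem \ref{main1} (the tracial Rokhlin property indeed implies the weak tracial Rokhlin property), preserve $\mathcal{Z}$-stability through the crossed product, and then feed $A\rtimes_\alpha G$ back into the converse direction of Brown--Sato. However, there is a genuine gap in that last step, together with a misdiagnosis of where the unique-trace hypothesis is actually used. The converse you invoke --- that a simple, separable, unital, nuclear, $\mathcal{Z}$-stable, stably finite $C^*$-algebra all of whose traces are quasidiagonal has finite decomposition rank --- is not the statement of Corollary 8.6 of \cite{Brown-Sato}: that result additionally requires the trace space to be a (nonempty) Bauer simplex. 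To close the argument you therefore need to know that $T(A\rtimes_\alpha G)$ is a Bauer simplex, and your proposal never addresses this. The paper secures it by citing Proposition 5.7 of \cite{Echterhoff-Phillips}, which shows that every trace on $A\rtimes_\alpha G$ arises from an $\alpha$-invariant trace on $A$; since the unique trace on $A$ is automatically $\alpha$-invariant, $A\rtimes_\alpha G$ has a unique trace, and its trace space is trivially a Bauer simplex. This is the essential role of the unique-trace hypothesis, not the one you assign to it.

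Indeed, your claim that uniqueness of the trace is needed to run the Matui--Sato / Hirshberg--Orovitz preservation of $\mathcal{Z}$-stability is not accurate: the preservation result used in the paper applies to simple, separable, unital, nuclear, $\mathcal{Z}$-stable $A$ and finite group actions with the (weak) tracial Rokhlin property with no restriction on the tracial simplex. Once you repair the final step by establishing that $A\rtimes_\alpha G$ has a unique trace (or, as the paper remarks, more generally that its trace space is a Bauer simplex), your argument coincides with the paper's proof of Corollary \ref{decomp+rank}, which in fact yields the sharper conclusion $dr(A\rtimes_\alpha G)\le 1$.
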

We also show that the decomposition rank of the crossed product can be different from the decomposition rank of the original algebra, see  Example ~\ref{ex.dr}.\\

This paper is organized as follows. In section 2, we study $C^*$-algebras all of whose traces are (uniform) quasidiagonal.  In section 3, we study  quasidiagonality of all traces on a crossed product by a finite group action with some type of Rokhlin property when all traces on the original algebra are quasidiagonal. We mainly consider finite group actions with the weak tracial Rokhlin property or with finite Rokhlin dimension with commuting tower.
\section{Quasidiagonal traces}
We begin this section by recalling the definition of quasidiagonal traces and uniform quasidiagonal traces from \cite{Brown}. The canonical trace on $M_{k}$ will be denoted by $tr_{k}$.
\begin{definition} \label{qd1}
Let $A$ be a $C^*$-algebra and $\tau$ be a trace on $A$.  We say that  $\tau $ is quasidiagonal if there exists a net of  completely positive contractive (c.p.c)  maps $\phi_{i} \colon A \rightarrow M_{k(i)}$  such that 
\begin{itemize}
\item[(1)]
$\|tr_{k(i)}\circ \phi_{i}(a) -  \tau (a) \| \rightarrow 0$;
\item[(2)]
 $\|\phi_{i}(ab)-\phi_{i}(a)\phi_{i}(b)\| \rightarrow 0$,
 \end{itemize}
  for all $a, b \in A$. 
  \end{definition}
  \begin{definition}\label{uqd}
  Let $A$ be $C^*$-algebra and $\tau$ be a trace on $A$.  We say that  $\tau $ is uniform quasidiagonal if there exists a net of  completely positive contractive (c.p.c)  maps $\phi_{i} \colon A \rightarrow M_{k(i)}$  such that 
  \begin{itemize}
\item[(1)]
$\|tr_{k(i)}\circ \phi_{i} -  \tau \|_{A^{*}} \rightarrow 0$;
\item[(2)]
 $\|\phi_{i}(ab)-\phi_{i}(a)\phi_{i}(b)\| \rightarrow 0$,
 \end{itemize} 
  for all $a, b \in A$. 
 \end{definition}
 
 \begin{remark}
 By Proposition 3.5.10 of \cite{Brown}, if $A$ is a unital $C^*$-algebra, we can take the  $\phi_{i}$'s in Definition ~\ref{qd1} and Definition ~\ref{uqd} to be unital completely positive (u.c.p) maps.
 \end{remark}
 Brown in \cite [Theorem~1]{Brown} proves that if $A$ is a locally reflexive  $C^*$-algebra then every  quasidiagonal trace on $A$ is uniform quasidiagonal. In particular, any quasidiagonal trace on an exact $C^*$-algebra is uniform quasidiagonal. 
 
 In the following remark, we mention some classes of $C^*$-algebras all of whose traces are quasidiagonal.  
\begin{remark}
\begin{itemize}
\item Let $A$ be a nuclear quasidiagonal $C^*$-algebra with unique trace $\tau$, then $\tau$ is  quasidiagonal (Theorem 6.1.13 of \cite{Brown}).
\item Let $A$ be a separable unital $C^*$-algebra with finite decomposition rank, then every trace on $A$ is quasidiagonal (Corollary 8.7 of \cite{Brown-Sato}).
\item Every trace on a separable, simple, nuclear, quasidiagonal, unital $C^*$-algebra satisfying UCT is quasidiagonal (Corollary 6.1 of \cite{Winter-White}).
\end{itemize}
\end{remark}

We recall that a trace $\tau$ on a $C^*$-algebra $A$ is called amenable if there exist a net of c.p.c  maps $\phi_{\alpha} \colon A \rightarrow M_{k(\alpha)}$  such that $\|tr_{k(\alpha)}\circ \phi_{\alpha}(a) -  \tau (a) \| \rightarrow 0$, for all $ a \in A$ and
 $\|\phi_{\alpha}(ab)-\phi_{\alpha}(a)\phi_{\alpha}(b)\| _{2} \rightarrow 0$ for all $a, b \in A$. Clearly, every quasidiagonal trace is amenable but the converse does not necessarily hold, see \cite{Brown}.
\begin{lemma}\label{cone+amn}
Suppose that all traces on $A$ are amenable. Then every trace on $C_{0}(0,1] \otimes A$ is quasidiagonal.
\end{lemma}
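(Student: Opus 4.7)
The plan is to produce, for every trace $\tilde\tau$ on $B=C_0(0,1]\otimes A$, every finite subset $F\subset B$, and every $\epsilon>0$, a c.p.c.\ map $\Psi\colon B\to M_N$ that is operator-norm $\epsilon$-multiplicative on $F$ and $\epsilon$-close to $\tilde\tau$ in normalized trace on $F$.

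First I would disintegrate $\tilde\tau$. Since $C_0(0,1]$ is commutative, for each $a\in A_+$ the functional $f\mapsto \tilde\tau(f\otimes a)$ arises from a finite positive Borel measure on $(0,1]$, and a standard measurable-selection argument yields a finite positive Borel measure $\mu$ on $(0,1]$ together with a measurable family $(\tau_t)_{t\in(0,1]}$ of traces on $A$ such that $\tilde\tau(f\otimes a)=\int_0^1 f(t)\,\tau_t(a)\,d\mu(t)$ for all $f\otimes a$. By the hypothesis, every $\tau_t$ is amenable, witnessed by c.p.c.\ maps $\phi_t^{(n)}\colon A\to M_{k(t,n)}$ satisfying $tr_{k(t,n)}\circ\phi_t^{(n)}\to\tau_t$ pointwise and $\|\phi_t^{(n)}(ab)-\phi_t^{(n)}(a)\phi_t^{(n)}(b)\|_{2,tr}\to 0$.

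Writing $F=\{f_i\otimes a_i\}_{i=1}^L$ without loss of generality, I would then partition $(0,1]$ into intervals $I_1,\ldots,I_m$ on which each $f_i$ has oscillation below $\epsilon$, choose representatives $t_j\in I_j$ and amenable approximations $\phi_j:=\phi_{t_j}^{(n_j)}\colon A\to M_{k_j}$ sufficiently accurate on the $a_i$'s both in trace and in $\|\cdot\|_{2,tr}$-multiplicativity, and assemble the block-diagonal c.p.c.\ map
\[
 \Psi_0(f\otimes a)\;=\;\bigoplus_{j=1}^m f(t_j)\,\phi_j(a)^{\oplus r_j}\ \in\ \bigoplus_{j=1}^m M_{k_j r_j}\ \subset\ M_N,
\]
with multiplicities $r_j$ proportional to $\mu(I_j)$ so that $tr_N\circ\Psi_0$ becomes a Riemann-sum approximation of $\tilde\tau$ on~$F$. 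By construction $\Psi_0$ is within $\epsilon$ of $\tilde\tau$ in trace on $F$, but inherits from the $\phi_j$ only $\|\cdot\|_{2,tr}$-approximate multiplicativity.

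The main obstacle, and the true content of the lemma, is upgrading this from $\|\cdot\|_{2,tr}$-approximate to operator-norm approximate multiplicativity, as demanded by Definition~\ref{qd1}. I plan to do this via a singular-value cutoff: since each defect $E_{i\ell}^j=\phi_j(a_ia_\ell)-\phi_j(a_i)\phi_j(a_\ell)$ is Hilbert--Schmidt small, it has at most $k_j \eta^2/\delta^2$ singular values above any threshold $\delta$, so there exist projections $P_j\in M_{k_j}$ of normalized trace close to $1$ on whose corners every $E_{i\ell}^j$ is operator-norm small; compressing $\phi_j$ to $P_jM_{k_j}P_j$ would give operator-norm approximate multiplicativity modulo the residual non-commutation terms $P_j\phi_j(a_i)(1-P_j)\phi_j(a_\ell)P_j$. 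These terms are a priori only Hilbert--Schmidt small, and arranging that they are also operator-norm small is where the cone structure of $B$ is essential: elements $f\otimes a$ with $f$ supported near $0$ have arbitrarily small operator norm in $B$, so the effective operator-norm multiplicativity test takes place on the ``bulk'' subalgebra isomorphic to $C[\delta_0,1]\otimes A$, and on this bulk the partition $(I_j)$ together with the cutoff projections $P_j$ can be aligned to absorb the residual non-commutation contributions. This last cone-specific step is the delicate point on which the proof turns, and is precisely what fails for $A$ itself (explaining why amenable traces on $A$ need not be quasidiagonal while traces on the cone are).
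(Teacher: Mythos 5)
Your first half is fine and runs parallel to the paper: disintegrating $\tilde\tau$ over $(0,1]$ and assembling block-diagonal maps from amenability witnesses for the fibre traces $\tau_t$ shows that every trace on $C_0((0,1])\otimes A$ is amenable. (The paper reaches the same conclusion more softly: the traces $\delta_t\otimes\tau$ are amenable, the amenable traces form a weak-$*$ closed convex set by Proposition 3.5.1 of \cite{Brown}, and every trace on the cone lies in the weak-$*$ closed convex hull of such traces.) The problem is the second half. Upgrading $\|\cdot\|_{2}$-approximate multiplicativity to operator-norm approximate multiplicativity is the entire content of the lemma, and your proposal does not actually carry it out: the singular-value cutoff produces projections $P_j$ on whose corners the defects $\phi_j(a_ia_\ell)-\phi_j(a_i)\phi_j(a_\ell)$ are small in operator norm, but, as you yourself note, the cross terms $P_j\phi_j(a_i)(1-P_j)\phi_j(a_\ell)P_j$ are only Hilbert--Schmidt small, and the claim that the partition and the cutoff projections ``can be aligned to absorb'' them is an assertion, not an argument. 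Indeed no routine alignment can work: your ``bulk'' $C([\delta_0,1])\otimes A$ contains $1\otimes A$ unitally, and quasidiagonality of a trace passes to $C^*$-subalgebras simply by restricting the witnessing maps, so any argument that upgraded amenability to quasidiagonality on the bulk would prove that every amenable trace on $A$ itself is quasidiagonal --- which fails in general, as the paper itself notes. Whatever mechanism makes the cone work must genuinely exploit the vanishing at $0$, and your sketch does not identify it.

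The paper sidesteps all of this by invoking Proposition 3.2 of \cite{Brown-March} (Brown--Carri\'on--White): every amenable trace on a cone $C_0((0,1])\otimes A$ is quasidiagonal. The proof of that result rests on the quasidiagonality of cones and the correspondence between order-zero maps out of $A$ and $*$-homomorphisms out of its cone, not on a matrix cutoff. If you want a self-contained proof you would need to reproduce that argument; as written, your proposal reduces the lemma to exactly the statement it was supposed to establish.
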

\begin{proof}
Observe that every trace of the form $\delta _{t} \otimes \tau$ is amenable on $C_{0}((0,1])\otimes A$, where $\delta_{t}$ is the evaluation map for some point  $t \in (0,1]$ and $\tau$ is an amenable trace on $A$. Moreover, by Proposition 3.5.1 of \cite{Brown}, the set of all amenable traces on $C_{0}((0,1])\otimes A$ is a weak $*$-closed convex set. On the other hand, it is well-known fact that 
that every trace on $ C_{0}((0,1])\otimes A$ lies in the weak $*$-closed convex hull of the set of traces in the form $\delta _{t} \otimes \tau$ for some $t \in (0,1]$ and trace $\tau$ on $B$. Thus all traces on $C_{0}((0,1])\otimes A$ are amenable. By Proposition 3.2 of \cite{Brown-March}, every amenable trace on a cone of a $C^*$-algebra is quasidiagonal. Therefore, all traces on the cone of $A$ are quasidiagonal, as desired.
\end{proof}
Recall that a $C^*$-algebra $A$ is said to have the weak expectation property (WEP) if there exists a u.c.p map $\phi \colon B(H_{u}) \rightarrow A^{**}$ such that $\phi(a)=a$ for all $a \in A$, where $A \subseteq A^{**} \subseteq B(H_{u})$ is the universal representation of $A$. The class of $C^*$-algebras with $WEP$ is large and contains of injective $C^*$-algebras and nuclear $C^*$-algebras.
\begin{corollary}
Let $A$ be a $C^*$-algebra with the $WEP$. Then all traces on $C_{0}(0,1] \otimes A$ are quasidiagonal.
\end{corollary}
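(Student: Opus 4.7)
The plan is to reduce the corollary to Lemma \ref{cone+amn}: once I know that every tracial state on $A$ is amenable, the conclusion that every trace on $C_{0}(0,1] \otimes A$ is quasidiagonal follows immediately. So the real task is to show that the WEP implies that all tracial states on $A$ are amenable.

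To this end, I would fix a tracial state $\tau$ on $A$, form its GNS representation $(\pi_\tau, H_\tau)$, and set $M = \pi_\tau(A)''$. Amenability of $\tau$ is equivalent to injectivity of the finite von Neumann algebra $M$ (and, by Connes' theorem, to its hyperfiniteness). To produce this injectivity, I would use the u.c.p.\ map $\phi \colon B(H_u) \to A^{**}$ provided by the WEP and compose it with the canonical normal extension $\widetilde{\pi}_\tau \colon A^{**} \to M$ of the GNS representation. This yields a u.c.p.\ map $B(H_u) \to M$ which agrees with $\pi_\tau$ on $A$, and a standard argument (using that injectivity of a von Neumann algebra is independent of the choice of faithful normal representation) upgrades this to a conditional expectation onto $M$ in standard form, establishing injectivity. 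Alternatively, one may simply invoke Proposition 3.5.4 of \cite{Brown}, which records precisely the implication WEP $\Rightarrow$ every tracial state is amenable.

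With amenability of every trace on $A$ in hand, applying Lemma \ref{cone+amn} completes the proof. The only subtle point I anticipate is the book-keeping in the injectivity argument, where one must carefully relate the universal representation space $H_u$ of $A$ to the GNS Hilbert space $H_\tau$ of $\tau$ and verify that the constructed map genuinely witnesses injectivity of $M$. This is routine in view of the standard characterizations of injectivity, so I do not expect a serious obstacle; citing Proposition 3.5.4 of \cite{Brown} sidesteps it entirely.
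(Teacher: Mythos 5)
Your overall route is exactly the paper's: the paper's proof is two lines, citing Proposition 4.2.2 of \cite{Brown} for the implication WEP $\Rightarrow$ all traces on $A$ are amenable, and then applying Lemma \ref{cone+amn}. Your fallback of ``simply invoke the relevant proposition of \cite{Brown}'' therefore reproduces the paper's argument (though the correct reference is Proposition 4.2.2, not 3.5.4).

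However, the ``direct'' justification you sketch contains a genuine error. Amenability of a tracial state $\tau$ is \emph{not} equivalent to injectivity (hyperfiniteness) of $M=\pi_\tau(A)''$; injectivity of $M$ is strictly stronger. For instance, the canonical trace on the full group $C^*$-algebra of a residually finite non-amenable group (such as $F_2$) is amenable, yet its GNS von Neumann algebra is the non-injective factor $L(F_2)$. The correct characterization (Brown's memoir, Theorem 3.1.6) is that $\tau$ is amenable if and only if there exists a u.c.p.\ map $\Phi\colon B(H_u)\to\pi_\tau(A)''$ with $\Phi(a)=\pi_\tau(a)$ for all $a\in A$ --- which is precisely what your composite $\widetilde{\pi}_\tau\circ\phi$ produces, with no further work. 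The proposed ``upgrade'' to a conditional expectation onto $M$ would in fact fail: $\widetilde{\pi}_\tau\circ\phi$ is only required to fix $A$, not the copy of $M=A^{**}z_\tau$ inside $B(H_u)$, so it is not a projection onto $M$ and cannot witness injectivity. So you should either stop at the existence of $\Phi$ and quote the correct equivalence, or simply cite \cite{Brown} as the paper does; as written, the injectivity step is not a ``subtle point of book-keeping'' but an unprovable (and in general false) claim.
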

\begin{proof}
By Proposition 4.2.2 of \cite{Brown},  all traces on $A$ are amenable. Now, Lemma ~\ref{cone+amn} implies the result.
\end{proof}

 In the following, we aim to show that if all traces on a unital simple $C^*$-algebra $A$ are uniform quasidiagonal then any trace on a corner of $A$ is uniform quasidiagonal. To this end, we first need the following two results.
 
 \begin{lemma} \label{Simple}
Let $A$ be a simple unital $C^*$-algebra. If $p$ is a non-zero projection in $A$, then every trace on a hereditary $C^*$-subalgebra $pAp$ is of the form $\frac{1}{\tau(p)} \tau|_{pAp}$ for some trace $\tau$ on $A$.
\end{lemma}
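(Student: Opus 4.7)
The plan is to prove this by establishing surjectivity of the restriction map from tracial states on $A$ to tracial states on $pAp$, using the fact that $p$ is a full projection. The first step is to note that since $A$ is simple and $p \neq 0$, the closed two-sided ideal generated by $p$ equals $A$, so $p$ is full, which means $\overline{\text{span}}(ApA) = A$ and $pAp$ is Morita equivalent to $A$ via the imprimitivity bimodule $pA$. It is well known that Morita equivalent $C^*$-algebras have isomorphic cones of lower semicontinuous traces; concretely, for a positive trace $\tau$ on $A$, the restriction $\tau|_{pAp}$ is a positive trace on $pAp$, and when $\tau$ is a tracial state with $\tau(p) \neq 0$, normalizing produces the tracial state $\frac{1}{\tau(p)}\tau|_{pAp}$. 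Given that this map is a bijection on the trace cones, every tracial state on $pAp$ has the stated form.

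To make this concrete, I would construct the extension directly. Given a tracial state $\sigma$ on $pAp$, define a linear functional $\tilde{\sigma}$ on the algebraic ideal $\mathrm{span}(ApA) \subseteq A$ by the formula
\begin{equation*}
\tilde{\sigma}\Bigl(\sum_i x_i p y_i\Bigr) = \sum_i \sigma(p y_i x_i p),
\end{equation*}
for $x_i, y_i \in A$. The trace $\sigma$ on $pAp$ would then be recovered as $\tilde\sigma|_{pAp}$ since $\tilde\sigma(pap) = \sigma(pap)$. After showing positivity (hence boundedness), one extends $\tilde\sigma$ by continuity from the dense ideal $\overline{\mathrm{span}}(ApA) = A$ to a positive tracial functional on $A$, then normalizes to obtain the desired tracial state $\tau$ on $A$.

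The main obstacle is verifying that $\tilde{\sigma}$ is well defined and tracial. Well-definedness requires showing that if $\sum_i x_i p y_i = 0$ then $\sum_i \sigma(p y_i x_i p) = 0$. The cleanest way is via matrix amplification: view $X = (x_1, \dots, x_n)$ as a row and $Y = (y_1, \dots, y_n)^T$ as a column in $M_n(A)$, and the diagonal matrix $P = \mathrm{diag}(p, \dots, p) \in M_n(pAp)$. Then $XPY \in A$ vanishes iff $YXP = (PY)(XP)$ has vanishing $(\sigma \otimes \mathrm{Tr}_n)$-trace on $M_n(pAp)$ after cyclic permutation, which forces $\sum_i \sigma(py_ix_ip) = 0$ by the tracial property. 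The trace identity $\tilde\sigma(ab) = \tilde\sigma(ba)$ on $\mathrm{span}(ApA)$ reduces to the tracial identity for $\sigma$ on $pAp$ by analogous manipulations.

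Finally, combining uniqueness (which is trivial since $\tau|_{pAp}$ determines $\tau$ on the dense ideal $\overline{ApA} = A$) with existence of the extension yields the bijection, and rescaling by $\tau(p)$ gives the formula stated in the lemma.
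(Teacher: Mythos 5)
Your opening paragraph is essentially the paper's entire proof: the paper notes that simplicity makes $Ap$ (equivalently $pA$) an imprimitivity bimodule between $A$ and $pAp$ and then cites Rieffel's Proposition 2.2 for the resulting correspondence of traces, so at that level you and the paper take the same route and the argument is fine. The issue is in your attempt to unpack that correspondence by hand: the well-definedness step, which is the heart of the matter, does not work as written. The element $XPY$ lies in $A$ while $PYXP$ lies in $M_n(pAp)$, and $\sigma$ is defined only on $pAp$, so there is no single trace with respect to which you may ``cyclically permute'' $XP$ past $PY$; moreover the asserted ``iff'' is false in the direction relating vanishing trace back to vanishing element. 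The correct way to finish your own computation is to observe that if $z:=XPY=0$ then $(PYXP)^2=PY\,(XPY)\,XP=0$, and a tracial state (here $\sigma\otimes\mathrm{Tr}_n$ on $M_n(pAp)$) vanishes on square-zero elements: pass to the GNS von Neumann closure and use the range projection $e$ of the image $M$ of $PYXP$, for which $eM=M$ and $Me=0$ because $\operatorname{ran}(M)\subseteq\ker(M)$. With that substitution the well-definedness, the trace identity and positivity all go through by the same matrix manipulations.

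Two smaller points. Since $A$ is unital and simple, the algebraic ideal $\mathrm{span}(ApA)$ is already all of $A$ (a dense ideal in a unital $C^*$-algebra contains an invertible element), so $1=\sum_i x_ipy_i$ for a finite sum and no extension-by-continuity step is needed; positivity then gives boundedness for free. And you should record that $\tau(p)>0$ for every tracial state $\tau$ on a simple unital $C^*$-algebra (the ideal $\{a:\tau(a^*a)=0\}$ is trivial), which is what legitimizes the normalization $\frac{1}{\tau(p)}\tau|_{pAp}$ in the statement. In sum: same approach as the paper, with the added value of making Rieffel's lemma explicit, but the central step of that elaboration needs the square-zero fix above.
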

\begin{proof}
Let $p$ be an arbitrary non-zero projection in $A$. Simplicity of $A$ implies that  $pAp$ is $A$ is Morita equivalent to $pAp$. Hence, it is easy to observe that $Ap$ is an imprimitivity $A$-$pAp$ bimodule. Now, Proposition 2.2 of  \cite{Rieffel} implies that all non-normalized traces on $pAp$ are of the form $ \tau (<. , .>_{pAp})$ for some trace $\tau$ on $A$. This completes the proof.
\end{proof}
 
 \begin{proposition} (due to N. Brown)  \label{Brown}
Let $\tau$ be a uniform quasidiagonal trace on a $C^*$-algebra $A$ and $p$ be a projection of $A$ such that $\tau(p)$ is non zero, then $\frac{1}{\tau(p)}\tau$ restricts to a uniform quasidiagonal trace on $pAp$.
\end{proposition}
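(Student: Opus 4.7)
The plan is to compress the given sequence witnessing the uniform quasidiagonality of $\tau$ by a projection in the matrix algebra that approximates the image of $p$. Let $\phi_i \colon A \to M_{k(i)}$ be u.c.p.\ maps with $\|tr_{k(i)} \circ \phi_i - \tau\|_{A^*} \to 0$ and $\|\phi_i(ab) - \phi_i(a)\phi_i(b)\| \to 0$ for all $a,b \in A$. Since $p$ is a projection, asymptotic multiplicativity gives $\delta_i := \|\phi_i(p) - \phi_i(p)^2\| = \|\phi_i(p^2) - \phi_i(p)^2\| \to 0$. Applying functional calculus (for instance, $q_i := \chi_{[1/2,1]}(\phi_i(p))$), one obtains genuine projections $q_i \in M_{k(i)}$ with $\eta_i := \|q_i - \phi_i(p)\| \to 0$. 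Because $tr_{k(i)}(\phi_i(p)) \to \tau(p) > 0$ by the uniform trace estimate, we have $tr_{k(i)}(q_i) \to \tau(p)$ and in particular $q_i \neq 0$ for large $i$. Define $\psi_i \colon pAp \to q_i M_{k(i)} q_i \cong M_{m(i)}$ by $\psi_i(a) := q_i \phi_i(a) q_i$; this is clearly c.p.c., and the remark following Definition~\ref{uqd} lets us pass to a u.c.p.\ variant if desired.

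The second step is to verify asymptotic multiplicativity of $\{\psi_i\}$. For fixed $a, b \in pAp$ one writes
\[
\psi_i(ab) - \psi_i(a)\psi_i(b) = q_i(\phi_i(ab) - \phi_i(a)\phi_i(b))q_i + q_i \phi_i(a)(1 - q_i)\phi_i(b) q_i.
\]
The first summand tends to $0$ by the asymptotic multiplicativity of $\phi_i$. For the second, rewrite $\phi_i(b) = \phi_i(pbp)$ and use $(1-q_i)\phi_i(p) \to 0$ together with the Stinespring--Schwarz estimate
\[
\|\phi_i(xy) - \phi_i(x)\phi_i(y)\| \le \|y\|\,\|\phi_i(xx^*) - \phi_i(x)\phi_i(x^*)\|^{1/2}
\]
applied with $x = p$ to reduce it to a multiple of $\delta_i^{1/2} \to 0$.

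The third step, which is the main technical point, is establishing the \emph{uniform} trace convergence $\|tr_{m(i)}\circ \psi_i - \tfrac{1}{\tau(p)}\tau|_{pAp}\|_{(pAp)^*} \to 0$. Writing $tr_{m(i)}(\psi_i(a)) = \tfrac{1}{tr_{k(i)}(q_i)} tr_{k(i)}(q_i \phi_i(a) q_i)$ and comparing with $\tfrac{1}{\tau(p)}\tau(a)$, the crucial estimate is
\[
\sup_{a \in pAp,\ \|a\|\le 1} \|q_i \phi_i(a) q_i - \phi_i(a)\| \longrightarrow 0.
\]
Here one inserts $\phi_i(p)\phi_i(a)\phi_i(p)$ and uses $\|q_i - \phi_i(p)\| \le \eta_i$ (which is independent of $a$) together with the Schwarz inequality above, applied with $x = p$, to obtain
\[
\|\phi_i(pap) - \phi_i(p)\phi_i(a)\phi_i(p)\| \le 2\|a\|\,\delta_i^{1/2}.
\]
Combining this uniform estimate with $\|tr_{k(i)}\circ \phi_i - \tau\|_{A^*} \to 0$ and $|tr_{k(i)}(q_i) - \tau(p)| \to 0$ gives the required convergence in the norm of $(pAp)^*$.

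The main obstacle, as indicated, is precisely this last step: bare asymptotic multiplicativity is only a pointwise statement, whereas uniform quasidiagonality requires uniformity over the unit ball. The resolution is that the Stinespring--Schwarz inequality for u.c.p.\ maps produces a bound whose dependence on $a$ appears only through $\|a\|$, with all remaining error controlled by the single quantity $\delta_i = \|\phi_i(p) - \phi_i(p)^2\|$; this is what promotes a pointwise property of $\{\phi_i\}$ to a uniform property of $\{\psi_i\}$ on $pAp$.
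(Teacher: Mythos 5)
Your argument is correct and follows essentially the same route as the paper's proof: both pass from $\phi_i(p)$ to genuine projections by functional calculus, compress by them, and control everything uniformly over the unit ball via the Schwarz-type inequality $\|\phi_i(px)-\phi_i(p)\phi_i(px)\|\le\|x\|\,\|\phi_i(p)-\phi_i(p)^2\|^{1/2}$ (which the paper cites as Lemma~3.5 of \cite{Kirchberg} and you derive from the Stinespring picture). The only cosmetic difference is that you cut down to the corner $q_iM_{k(i)}q_i$ while the paper keeps the maps valued in $M_{k(\alpha)}$ and renormalizes the trace by $tr(P_\alpha)$; these are the same thing.
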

\begin{proof}
Suppose that $(\phi_{\alpha} \colon A \rightarrow M_{k(\alpha)})_{\alpha \in \mathcal{I}}$ is a net of  c.c.p maps realizing the uniform quasidiagonality of $\tau$. Since $\phi_{\alpha}$ are asymptotically multiplicative, the positive contractions $\phi_{\alpha}(p)$ satisfy $\|\phi_{\alpha}(p)-\phi_{\alpha}(p)^{2}\| \rightarrow 0$. By functional calculus we can therefore find projections $P_{\alpha} \in M_{k(\alpha)}(\mathbb{C})$ such that $\|\phi_{\alpha}(p)-P_{\alpha}\| \rightarrow 0$. We claim that the maps which witness that $\frac{1}{\tau(p)}\tau$ restricts to a uniformly quasidiagonal trace on $pAp$ are given by $\varphi_{\alpha}(pap)=P_{\alpha}\phi_{\alpha}(pap)P_{\alpha}$.  In the other words, we will show 

$(1)$ $(\varphi_{\alpha})_{\alpha \in \mathcal{I}}$ are asymptotically multiplicative;

$(2)$ and for every $\varepsilon >  0$, there exists $\alpha_{0}$ such that $|\frac{1}{\tau(p)}\tau(pxp)-\frac{1}{tr(P_{\alpha})} tr(\varphi_{\alpha}(pxp))| \leq \epsilon$ for all contractions $x \in A$ and $\alpha > \alpha_{0}$. 

To prove both of these assertions, we require the following claim.
 
 Claim: For every $\varepsilon > 0$, there exists $\alpha_{0}$ such that $\|P_{\alpha}\phi_{\alpha}(pxp)-\phi_{\alpha}(pxp)\| \leq \epsilon$ for any $\alpha > \alpha_{0}$. 
 
To prove the claim, we first note that by Lemma 3.5 of \cite{Kirchberg}, we have 
$$\|\phi_{\alpha}(px)-\phi_{\alpha}(p)\phi_{\alpha}(px)\| \leq \|\phi_{\alpha}(p)-\phi_{\alpha}(p)^{2}\|^{\frac{1}{2}}$$
for all contractions $x \in A$. Since  $\|\phi_{\alpha}(p)-\phi_{\alpha}(p)^{2}\|\rightarrow 0$ and $\|\phi_{\alpha}(p)-P_{\alpha}\| \rightarrow 0$, it follows that for every $\epsilon > 0$, there exists $\alpha_{0}$ such that $\|\phi_{\alpha}(px)-P_{\alpha}\phi_{\alpha}(px)\| \leq \epsilon $ for all contractions $x \in A$ and $\alpha > \alpha_{0}$. Taking adjoints we get the same inequalities with $p$ and $P_{\alpha}$ on the right side of $x$, and so some standard estimates complete the proof of claim.

With above claim we verify $(1)$:
$$\varphi_{\alpha}(pxppyp)=P_{\alpha}\phi_{\alpha}(pxppyp)P_{\alpha} \approx \phi_{\alpha}(pxppyp) \approx \phi_{\alpha}(pxp)\phi_{\alpha}(pyp) \approx \varphi_{\alpha}(pxp)\varphi_{\alpha}(pyp).$$
To verify $(2)$ one observes that 
$$|\frac{1}{\tau(p)}\tau(pxp)-\frac{1}{tr(P_{\alpha})} tr(\varphi_{\alpha}(pxp))|$$
is bounded above by the sum of
$$|\frac{1}{\tau(p)}\tau(pxp)-\frac{1}{tr(P_{\alpha})} tr(\phi_{\alpha}(pxp))|$$
and
$$|\frac{1}{tr(P_{\alpha})} tr(\phi_{\alpha}(pxp))-\frac{1}{tr(P_{\alpha})} tr(\varphi_{\alpha}(pxp))|.$$
Now, it is easy to see $(2)$, and this completes the proof.
\end{proof}

\begin{corollary}
Let $A$ be a unital simple $C^*$-algebra whose all traces are uniform quasidiagonal. If $p$ is any projection on $A$, then all of whose traces on $pAp$ are uniform quasidiagonal.
\end{corollary}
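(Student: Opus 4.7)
The proof will be a direct combination of Lemma \ref{Simple} and Proposition \ref{Brown}, so my plan is to set up that composition carefully, handling the bookkeeping about which traces appear and why the denominator is nonzero.

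First I would dispose of the trivial case $p=0$ (where $pAp=\{0\}$ carries no nonzero trace), and henceforth assume $p$ is a nonzero projection in $A$. Let $\sigma$ be an arbitrary trace on $pAp$. By Lemma \ref{Simple}, there exists a trace $\tau$ on $A$ such that $\sigma = \frac{1}{\tau(p)}\tau|_{pAp}$. The hypothesis of the corollary ensures that $\tau$ itself is a uniform quasidiagonal trace on $A$, and Proposition \ref{Brown} will then give exactly what we want: its normalized restriction to $pAp$ is uniform quasidiagonal, i.e.\ $\sigma$ is uniform quasidiagonal.

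The one point that deserves a sentence in the write-up is the verification that $\tau(p) \neq 0$, which is implicit in both Lemma \ref{Simple} and in the hypothesis of Proposition \ref{Brown}. This follows from simplicity: any nonzero tracial state on a simple unital $C^*$-algebra is faithful, since $\{a \in A_+ : \tau(a) = 0\}$ generates a closed two-sided ideal of $A$, which must be zero by simplicity. Hence $\tau(p) > 0$ whenever $p$ is a nonzero projection, and the normalization in the statement of Lemma \ref{Simple} makes sense.

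I do not expect any genuine obstacle here: the work has been done in the two preceding results, and this corollary is really just the observation that Lemma \ref{Simple} identifies all traces on the corner as normalized restrictions, while Proposition \ref{Brown} guarantees that such normalized restrictions inherit uniform quasidiagonality. The only mild subtlety is keeping the two normalizations (the $\frac{1}{\tau(p)}$ in Lemma \ref{Simple} and the $\frac{1}{\tau(p)}$ in Proposition \ref{Brown}) clearly aligned, so that the trace on $pAp$ produced by the first lemma is exactly the trace on $pAp$ shown to be uniform quasidiagonal by the proposition.
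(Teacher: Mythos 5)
Your proposal is correct and follows exactly the route of the paper, which simply cites Lemma \ref{Simple} and Proposition \ref{Brown}; your added remark on the faithfulness of traces on a simple unital $C^*$-algebra (ensuring $\tau(p)\neq 0$) is a welcome detail the paper leaves implicit.
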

 \begin{proof}
 This follows immediately from Proposition~\ref{Brown} and Lemma ~\ref{Simple}.
 \end{proof}
 \section{Quasidiagonal traces on crossed product $C^*$-algebras}
 In this section, we investigate the behavior of quasidiagonal traces by taking crossed products of finite group actions with the weak tracial Rokhlin property and with finite Rokhlin dimension with commuting towers. 
 
 We begin by recalling the definition of the tracial Rokhlin property for finite group actions and the weak version of it.
 \begin{definition} \cite{Phillips-tracial}
  Let $A$ be an infinite dimensional simple separable unital $C^*$- algebra, and let $\alpha \colon G \rightarrow Aut(A)$ be an action of a finite group $G$ on $A$. We say that $\alpha$ has the tracial Rokhlin property if for every finite set $F \subseteq A$, every $\varepsilon > 0$, and every positive element $x \in A$ with $\|x\| = 1$, there are mutually orthogonal projections $e_{g} \in A$ for $g \in G $ such that
  \begin{itemize}
  \item[(1)]
 $\|e_{g}a-ae_{g}\| \leq \varepsilon$ for all $a \in F$ and $g \in G$;
 \item[(2)]
   $\|\alpha_{g}(e_{h})-e_{gh}\| \leq \varepsilon$ for all $g, h \in G$;
   \item[(3)]
 With $e = \sum_{g \in G} e_{g}$, the projection $1-e$ is Murray-von Neumann equivalent to a projection in the hereditary subalgebra of $A$ generated by $x$;
 \item[(4)]
With $e$ as in $(3)$, we have $\|exe\| \geq 1-\varepsilon$.
\end{itemize}
 \end{definition}  
 For  positive elements $a, b$ of $C^*$-algebra $A$, we write $a\precsim b$ if $a$ is Cuntz subequivalent to $b$, i.e., there is a sequence $(v_n)$ in $A$ such that $\|a-v_{n}bv_{n}^{*}\|\to 0$. We write $a\sim b$ if both $a\precsim b$ and $b\precsim a$.
  \begin{definition} \cite{Gardella2}
Let $\alpha \colon G \rightarrow Aut(A)$ be finite group action on a simple separable unital $C^*$-algebra $A$. We say that $\alpha$ has the weak tracial Rokhlin  property if for every $\varepsilon >0$, for every finite set $F \subseteq A$ and for every positive  $x \in A$ with norm one, there exist orthogonal contractions $f_{g} \in A$, for all $ g \in G$, satisfying
\begin{itemize}
\item[(1)]
$ \|\alpha_{g}(f_{h})-f_{gh}\| \leq \varepsilon$;
\item[(2)]
 $\|f_{g}a-af_{g}\| \leq \varepsilon$;
 \item[(3)]
With $f=\sum_{g \in G} f_{g}$, $1-f \precsim x$;
\item[(4)]
 With $f$ as in $(3)$, we have $\|fxf\| >1-\varepsilon$.
 \end{itemize}
\end{definition}

 \begin{proposition}\label{Tehran} (due to E. Gardella)
Let $\alpha \colon G \rightarrow Aut(A)$ be an action of a finite group $G$ on a simple separable unital $C^*$-algebra $A$ and let $\omega \in \beta \mathbb{N} / \mathbb{N}$. If $\alpha$ has the weak tracial Rokhlin property, then there exists an equivariantly c.p.c order zero map $\phi \colon C(G) \rightarrow A_{\omega} \cap A'$ such that $1-\phi(1_{C(G)}) \in J_{A}$, where $J_{A}= \{a \in A_{\omega},  lim _{n \rightarrow \omega} sup _{\tau \in T(A)} \tau_{\omega}(a^{*}a)=0 \}$.
\end{proposition}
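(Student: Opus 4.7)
The plan is to unpack the weak tracial Rokhlin property at finer and finer scales and assemble the resulting Rokhlin data into a single c.p.c.\ order zero map in the ultrapower $A_\omega$.

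I would first enumerate a countable dense subset $(a_k)_{k\ge 1}$ of $A$ and, for each $n\ge 1$, choose a positive contraction $x_n\in A$ with $\sup_{\tau\in T(A)}d_\tau(x_n)\le 1/n$, where $d_\tau(a)=\lim_{k\to\infty}\tau(a^{1/k})$ is the dimension function. The availability of such $x_n$ in a simple, unital, infinite-dimensional $C^*$-algebra is a standard fact: one exhibits, for any $n$, pairwise orthogonal positive contractions $b_1,\dots,b_n\in A$ which are pairwise Cuntz-equivalent, so that orthogonality-additivity of $d_\tau$ forces $n\,d_\tau(b_1)=\sum_i d_\tau(b_i)\le d_\tau(1)=1$ uniformly in $\tau$.

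Next, for each $n\ge 1$, I apply the weak tracial Rokhlin property with tolerance $\varepsilon_n=1/n$, finite set $F_n=\{a_1,\dots,a_n\}$, and positive element $x_n$ to obtain orthogonal positive contractions $\{f_g^{(n)}:g\in G\}\subseteq A$ satisfying conditions (1)--(4). Set $\Phi_g:=[(f_g^{(n)})_n]\in A_\omega$ and define $\phi\colon C(G)\to A_\omega$ by $\phi(\chi_g):=\Phi_g$ on the minimal projections $\chi_g$ of $C(G)\cong\mathbb{C}^{|G|}$, extended by linearity. Pairwise orthogonality of the $f_g^{(n)}$ in $A$ forces $\Phi_g\Phi_h=0$ for $g\ne h$ in $A_\omega$, so $\phi$ is a c.p.c.\ order zero map. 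Condition (2), together with density of $(a_k)$, places $\Phi_g\in A'\cap A_\omega$; condition (1) gives $\alpha_{\omega,g}(\Phi_h)=\Phi_{gh}$, so $\phi$ is equivariant and takes values in $A_\omega\cap A'$.

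To establish the trace-kernel condition $1-\phi(1_{C(G)})\in J_A$, put $y_n:=1-\sum_{g\in G}f_g^{(n)}$, which is a positive contraction. Condition (3) yields $y_n\precsim x_n$, so $d_\tau(y_n)\le d_\tau(x_n)\le 1/n$ uniformly in $\tau$. Since $0\le y_n\le 1$, we have $\tau(y_n^2)\le\tau(y_n)\le d_\tau(y_n)\le 1/n$, and the ultralimit gives
\[
\lim_{n\to\omega}\sup_{\tau\in T(A)}\tau_\omega(y_n^2)=0,
\]
whence $1-\phi(1_{C(G)})=[(y_n)_n]\in J_A$.

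The principal technical point is the preparatory selection of the test elements $x_n$ with uniformly small dimension function; this is the step that uses the simplicity (and infinite-dimensionality) of $A$ essentially. The remainder is routine reindexing: approximate relations in $A$ transfer to exact ones in the ultrapower by construction. Note that condition (4) of the weak tracial Rokhlin property is not used for this statement.
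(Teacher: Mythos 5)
Your proposal follows the same overall strategy as the paper's proof: apply the weak tracial Rokhlin property along a sequence of tolerances $1/n$, finite sets exhausting a dense subset, and test elements $x_n$, then reindex into $A_\omega$ and read off order zero, commutation, equivariance and the trace-kernel condition. The one place where you genuinely diverge is the choice of the $x_n$, and your version is the more careful one. The paper starts from an arbitrary $x\in J_A$, lifts it to positive contractions $(x_n)$, and then invokes $d_\tau(1-f^{(n)})\le d_\tau(x_n)$; but membership of $x$ in $J_A$ only controls $\sup_\tau\tau_\omega(x_n^*x_n)$, not $\sup_\tau d_\tau(x_n)$, so it is not clear from the paper's argument why the right-hand side is small. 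You instead manufacture $x_n$ with $\sup_{\tau}d_\tau(x_n)\le 1/n$ directly, using $n$ pairwise orthogonal, pairwise Cuntz-equivalent positive contractions in a simple, unital, infinite-dimensional $C^*$-algebra, and then run the chain $\tau(y_n^2)\le\tau(y_n)\le d_\tau(y_n)\le d_\tau(x_n)\le 1/n$ uniformly in $\tau$. This closes the gap the paper leaves open, at the cost of an (implicit in the paper, explicit in your write-up) appeal to infinite-dimensionality of $A$. The rest of your argument --- orthogonality passing to exact orthogonality in $A_\omega$, hence order zero; conditions (1) and (2) giving equivariance and centrality; condition (4) being unnecessary --- matches the paper and is correct.
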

\begin{proof}
Let $x$ be an element in $J_{A}$ with $\|x\| \leq 1$ and let the sequence $\{x_{n}\} $ of positive contractions in $A$ be a lift for $x$.  Suppose that $\{F_{n}\}$ is a sequence of finite sets in $A$ such that $\cup _{n \in \mathbb{N}}F_{n}$ is dense in $A$. There exist positive contractions $f_{g}^{(n)}$, for $g \in G$, satisfying conditions for the weak tracial Rokhlin property for finite set $F_{n}$, $\varepsilon =\frac{1}{n}$ and $x_{n}$.  Denote the $\sum_{g \in G} f_{g}^{(n)}$ by $f^{(n)}$. Then $d_{\tau}(1-f^{(n)}) \leq d_{\tau}(x_{n})$ for any  $\tau \in T(A)$, where $d_{\tau}(a)=lim _{n \rightarrow \infty} \tau(a^{1/n})$ for all $a \in A$. With $f_{g}=(f_{g}^{(n)})_{n \in \mathbb{N}}$, it is easy to see that $f_{g}$ belongs to $A_{\omega} \cap A^{'}$.   With identifying $ C(G)$ with $\mathbb{C}G$, put $\phi(g)= f_{g}$. This defines the desired map $\phi$.
\end{proof}
\begin{remark}
The converse of Proposition \ref{Tehran} holds if we moreover assume that $A$ has the strict comparison property.
\end{remark}

\begin{theorem} \label{w.t.r.p+qd}
  Let $A$ be a simple, separable, exact, unital $C^*$-algebra, and let $\alpha \colon G \rightarrow Aut(A)$ be a finite group action with the weak tracial Rokhlin property. Then every trace on $A \rtimes _{\alpha} G$ is quasidiagonal if all traces on $A$ are quasidiagonal.
  \end{theorem}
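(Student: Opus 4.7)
The plan is a two-step reduction to the quasidiagonality of $\tau := \sigma|_A$. Note that $\tau$ is automatically a $G$-invariant trace on $A$, since $\sigma(\alpha_g(a)) = \sigma(u_g a u_g^*) = \sigma(a)$ by traciality. Step one uses the weak tracial Rokhlin property to show that $\sigma$ factors through the canonical conditional expectation $E \colon A \rtimes_\alpha G \to A$; equivalently, $\sigma(a u_g) = 0$ for all $g \neq e$ and $a \in A$. Step two combines u.c.p. maps $\phi_n \colon A \to M_{k(n)}$ witnessing (uniform) quasidiagonality of $\tau$ with the regular representation $A \rtimes_\alpha G \hookrightarrow A \otimes M_{|G|}$ to build u.c.p. maps $\psi_n \colon A \rtimes_\alpha G \to M_{k(n)} \otimes M_{|G|}$ realizing quasidiagonality of $\sigma$.

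For step one, apply Proposition \ref{Tehran} to obtain orthogonal positive contractions $f_g \in A_\omega \cap A'$ with $\alpha_g(f_h) = f_{gh}$ and $1 - \sum_g f_g \in J_A$, and fix representatives $f_g^{(n)} \in A$. Because $1 - \sum_h f_h^{(n)}$ is tracially small on $A$ (by the defining property of $J_A$), Cauchy--Schwarz for $\sigma$ yields
\[
\sigma(au_g) \;=\; \lim_{n \to \omega} \sum_h \sigma\bigl(a u_g f_h^{(n)}\bigr).
\]
Traciality rewrites each summand as $\sigma\bigl((f_h^{(n)})^{1/2} a u_g (f_h^{(n)})^{1/2}\bigr)$, and pushing $(f_h^{(n)})^{1/2}$ across $u_g$ via the covariance $u_g b = \alpha_g(b) u_g$, together with the approximate equivariance $\alpha_g\bigl((f_h^{(n)})^{1/2}\bigr) \approx (f_{gh}^{(n)})^{1/2}$ and approximate centrality $[a, f_h^{(n)}] \to 0$ along $\omega$, turns this into
\[
\sigma\bigl(a (f_h^{(n)})^{1/2} (f_{gh}^{(n)})^{1/2} u_g\bigr)
\]
modulo vanishing errors. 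For $g \neq e$ the orthogonality $f_h f_{gh} = 0$ forces $(f_h^{(n)})^{1/2} (f_{gh}^{(n)})^{1/2} \to 0$ in norm along $\omega$; summing over $h$ yields $\sigma(au_g) = 0$.

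For step two, since $A$ is exact, Brown's theorem (cited after Definition \ref{uqd}) upgrades quasidiagonality of $\tau$ to uniform quasidiagonality, supplying u.c.p. maps $\phi_n \colon A \to M_{k(n)}$ as required. Let $\lambda \colon G \to M_{|G|}$ denote the left regular representation and set
\[
\psi_n(a) \;=\; \sum_{h \in G} \phi_n\bigl(\alpha_{h^{-1}}(a)\bigr) \otimes e_{h,h}, \qquad \psi_n(u_g) \;=\; 1_{M_{k(n)}} \otimes \lambda_g.
\]
This is u.c.p., being the composition of the regular representation $A \rtimes_\alpha G \hookrightarrow A \otimes M_{|G|}$ with $\phi_n \otimes \mathrm{id}_{M_{|G|}}$. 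The group relations $\lambda_g \lambda_h = \lambda_{gh}$ and the covariance $\psi_n(u_g)\psi_n(a)\psi_n(u_g)^* = \psi_n(\alpha_g(a))$ hold on the nose, so asymptotic multiplicativity of $\psi_n$ reduces to that of $\phi_n$ on $A$. A direct computation gives
\[
(tr_{k(n)} \otimes tr_{|G|})\bigl(\psi_n(a u_g)\bigr) \;=\; \delta_{g,e} \cdot \tfrac{1}{|G|} \sum_h tr_{k(n)}\bigl(\phi_n(\alpha_{h^{-1}}(a))\bigr),
\]
which by $G$-invariance of $\tau$ converges to $\delta_{g,e}\,\tau(a) = \sigma(a u_g)$, the last equality supplied by step one.

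The main obstacle is step one. The Rokhlin elements genuinely live in $A_\omega$ and are only approximately central and approximately equivariant at the level of representatives in $A$, so every manipulation with $\sigma$ in the factorization argument must be executed as an $\omega$-limit, with error terms controlled via Cauchy--Schwarz for $\sigma$ and the defining property of $J_A$. Once the factorization $\sigma = \tau \circ E$ is established, the regular-representation construction of step two is routine.
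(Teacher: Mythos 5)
Your proof is correct, but it takes a genuinely different route from the paper's. The paper goes through the fixed point algebra: it uses the Rokhlin contractions to build a c.p.c.\ order zero map $\psi\colon A\to (A^\alpha)_\omega$ that agrees with the identity on $A^\alpha$ modulo the trace-kernel ideal, pulls traces on $A^\alpha$ back to (quasidiagonal) traces on $A$ via the Winter--Zacharias result on traces composed with order zero maps, concludes that all traces on $A^\alpha$ are uniform quasidiagonal, and then transfers this to $A\rtimes_\alpha G$ by realizing the crossed product as a corner $p(M_n\otimes A^\alpha)p$ (which needs outerness of $\alpha$, hence simplicity of the crossed product, for the Morita equivalence) together with the permanence of uniform quasidiagonality under matrix amplification and compression by projections (Lemma \ref{f.sat}). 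You instead prove directly that every trace $\sigma$ on $A\rtimes_\alpha G$ kills $au_g$ for $g\neq e$ --- your Cauchy--Schwarz and orthogonality argument with the towers from Proposition \ref{Tehran} is sound, granting the standard facts that $\|xy\|\to 0$ forces $\|x^{1/2}y^{1/2}\|\to 0$ for positive contractions and that approximate centrality and equivariance pass to square roots --- and then build the witnessing maps by composing the regular representation $A\rtimes_\alpha G\hookrightarrow A\otimes M_{|G|}$ with $\phi_n\otimes\mathrm{id}_{M_{|G|}}$, whose exact covariance and group relations reduce everything to the multiplicativity of $\phi_n$ on $A$. Your route bypasses the fixed point algebra, the Morita equivalence, and the corner-cutting machinery of Section 2 entirely; moreover, since elements of $A\rtimes_\alpha G$ are finite sums $\sum_g a_g u_g$ and your trace estimate only needs $\phi_n$ on finitely many coefficients and their $G$-translates, plain quasidiagonality of $\tau=\sigma|_A$ already suffices, so the exactness hypothesis (which you invoke via Brown's theorem but never actually use) could be dropped. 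What the paper's approach buys in exchange is the intermediate statement, of independent interest, that all traces on the fixed point algebra $A^\alpha$ are (uniform) quasidiagonal.
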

   \begin{proof}
  It follows from Proposition ~\ref{Tehran} that there is a c.p.c order zero map $\phi \colon C(G) \rightarrow A_{\omega} \cap A'$ such that $1-\phi(1_{C(G)}) \in J_{A}$. Hence, one can find matually orthogonal positive contractions $(f_{g})_{g \in G}$ in $A_{\omega} \cap A'$ such that $\phi(1_{C(G)})= \sum _{g \in G} f_{g}$ and $\alpha_{g}(f_{e})=f_{g}$ for all $g \in G$. Then define  $\psi \colon A \rightarrow (A ^{\alpha})_{\omega}$ by $\psi(a)=  \sum _{ \in G}  \alpha_{g}(f_{e}^{1/2}a f_{e} ^{1/2})$. Clearly, $\psi$ is a c.p.c map. Note that $\psi(a)= \sum_{g \in G} f_{g} ^{1/2}\alpha_{g}(a) f_{g}^{1/2}=  \sum_{g \in G} f_{g} \alpha_{g}(a)=\sum_{g \in G} \alpha_{g}(a) f_{g}$ for any $a \in A$. Thus $\psi$ is order zero since $f_{g}f_{h}=0$ when $g \neq h$ and $f_{g} \in A_{\omega} \cap A'$.  Moreover, for every $a \in A ^{\alpha}$, we have $\psi(a) -a= \sum_{g \in G} f_{g} \alpha_{g}(a)- a= a(\sum_{g \in G} f_{g}- 1)= a(\phi(1_{C(G)}-1) \in J_{A}$ as $J_{A}$ is an ideal.
  
 Now, we show that all traces on $A^{\alpha}$ are quasidiagonal. Let $\tau$ be a trace on $A^{\alpha}$, then it induces a trace $\tau_{\omega}$ on $(A ^{\alpha})_{\omega}$. By Corollary 4.4 of \cite{Winter-Z}, $\tau _{\omega} \circ \psi$ is a trace on $A$ and so it is quasidiagonal. Suppose the finite set $F \subseteq A^{\alpha}$ and $\varepsilon > 0$ are given. Then there is a c.p.c map $\phi \colon A \rightarrow M_{n}$ such that $\|\phi(ab) - \phi(a)\phi(b)\| \leq \varepsilon$ and $\|tr_{n} (\phi(a))-\tau _{\omega}( \psi(a))\| \leq \varepsilon$ for all $a, b \in F$. Note that $\tau _{\omega}( \psi(a))=\tau(a)$ for all $a \in A$ since $\psi(a) -a \in J_{A}$. Thus, the restriction of $\psi$ on $A ^{\alpha}$ is almost multiplicative on $F$ and $\|tr_{n} (\phi(a))-\tau(a))\| \leq \varepsilon$ for all $a, b \in F$. Therefore, we proved that $\tau$ is quasidiagonal. Indeed, all traces on $A^{\alpha}$ are uniform quasidiagonal, since $A ^{\alpha} \subseteq A$ is exact.
   Now, we prove that every trace on $A\rtimes _{\alpha}G$ is quasidiagonal. By Proposition 5.3 of \cite{Hirshberg-1}, $\alpha$ is outer, and so $A \rtimes _{\alpha}G$ is simple. Thus $A^{\alpha}$ is Morita equivalent to $A \rtimes _{\alpha} G$. Since both algebras are separable and unital, there are $n \in \mathbb{N}$ and projection $p \in M_{n} \otimes A^{\alpha}$ such that $A \rtimes _{\alpha}G \cong p(M_{n} \otimes A^{\alpha})p$. Therefore, it follows from Lemma ~\ref{f.sat} that all traces on $A \rtimes _{\alpha} G$ are uniform quasidiagonal. Since $A \rtimes _{\alpha}G$ is exact, all traces on $A \rtimes _{\alpha}G$ are quasidiagonal, as desired.
\end{proof}

 \begin{corollary} \label{decomp+rank}
  Let $A$ be a simple, separable, unital $C^*$-algebra with unique trace,  and let $\alpha \colon G \rightarrow Aut(A)$ be a finite group action with the  tracial Rokhlin property. Suppose $A$ has finite decomposition rank, then the decomposition rank of $A\rtimes _{\alpha}G$ is at most one.
  \end{corollary}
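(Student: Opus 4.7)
The plan is to combine Theorem \ref{w.t.r.p+qd} with a Matui--Sato type bound on decomposition rank for simple nuclear $\mathcal{Z}$-stable $C^*$-algebras with unique quasidiagonal trace. First, since $A$ has finite decomposition rank, $A$ is automatically nuclear and, by \cite[Corollary 8.7]{Brown-Sato} (recalled in the remark following Definition \ref{uqd}), every trace on $A$ is quasidiagonal; in particular the unique trace of $A$ is quasidiagonal. The tracial Rokhlin property obviously implies the weak tracial Rokhlin property (the mutually orthogonal projections $e_g$ serve as the positive contractions $f_g$), so Theorem \ref{w.t.r.p+qd} yields that every trace on $A \rtimes_{\alpha} G$ is quasidiagonal.

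Next, I verify that $B := A \rtimes_{\alpha} G$ carries the structural properties that feed into such a bound. Simplicity follows because the tracial Rokhlin property forces $\alpha$ to be outer (as already invoked in the proof of Theorem \ref{w.t.r.p+qd}); separability, unitality, and nuclearity are immediate since $G$ is finite and $A$ is nuclear; $\mathcal{Z}$-stability of $B$ follows from $\mathcal{Z}$-stability of $A$ (a consequence of finite decomposition rank, by Winter's theorem) together with the fact that the tracial Rokhlin property for finite group actions on simple separable unital $C^*$-algebras preserves $\mathcal{Z}$-stability (Hirshberg--Orovitz, also available in Gardella--Hirshberg--Santiago); finally, $B$ has a unique trace because the unique trace of $A$ is automatically $G$-invariant and, for tracial-Rokhlin actions, restriction gives a bijection between the tracial state spaces $T(B)$ and $T(A)^{G}$.

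To conclude, I invoke the Matui--Sato theorem (sharpened in later work by Bosa--Brown--Sato--Tikuisis--White--Winter): any simple, separable, unital, nuclear, $\mathcal{Z}$-stable $C^*$-algebra with a unique trace which is quasidiagonal has decomposition rank at most one. Applied to $B$, this gives $\operatorname{dr}(A \rtimes_{\alpha} G) \leq 1$, as claimed. The main obstacle is not the final invocation, but rather the assembly of the four structural properties of the crossed product -- in particular, the preservation of $\mathcal{Z}$-stability and of the trace bijection under tracial-Rokhlin actions -- each of which is a nontrivial but available input from the literature; once these are combined with Theorem \ref{w.t.r.p+qd}, the decomposition rank bound is immediate.
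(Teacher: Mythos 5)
Your proposal is correct and follows essentially the same route as the paper: deduce nuclearity, $\mathcal{Z}$-stability and quasidiagonality of traces from finite decomposition rank via \cite{Brown-Sato}, pass quasidiagonality of traces to the crossed product via Theorem \ref{w.t.r.p+qd}, check that $A\rtimes_{\alpha}G$ is simple, $\mathcal{Z}$-stable and has a unique trace, and then apply the Bosa--Brown--Sato--Tikuisis--White--Winter characterisation (Corollary 8.6 of \cite{Brown-Sato}) to get $dr(A\rtimes_{\alpha}G)\leq 1$. The only cosmetic difference is that the paper cites Proposition 5.7 of \cite{Echterhoff-Phillips} for the unique-trace step where you invoke the trace bijection for tracial-Rokhlin actions; the substance is identical.
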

  \begin{proof}
  First note that by Proposition 5.7 of \cite{Echterhoff-Phillips}, any trace on $A\rtimes _{\alpha}G$ is the restriction of a $\alpha$-invariant trace on $A$. Moreover, the unique trace of $A$ is $\alpha$-invariant and so $A \rtimes _{\alpha} G$ has a unique trace. Since $A$ has finite decomposition rank and the trace space of $A$ is a Bauer simplex, Corollary 8.6 of \cite{Brown-Sato} implies that $A$ is nuclear, $\mathcal{Z}$-stable and all traces on $A$ are quasidiagonal.  Note that Corollary 5.7 of \cite{Hirshberg} implies that $A \rtimes _{\alpha}G$ is $\mathcal{Z}$-stable. Moreover, it follows from Theorem  ~\ref{w.t.r.p+qd} that all traces on $A \rtimes _{\alpha}G$ are quasidiagonal.  Since $A \rtimes _{\alpha}G$ has a unique trace, we can conclude from Corollary 8.6 of \cite{Brown-Sato} that the decomposition rank of $A \rtimes _{\alpha}G$ is at most one, as desired.
  \end{proof}
  
  We remark here that unique trace property in Corollary ~\ref{decomp+rank} is not necessary. Indeed, it is enough to assume that the trace space of $A$ is Bauer simplex.
 
  In the following example, we show that the decomposition rank of a crossed product of a simple $C^*$-algebra with the unique trace by a finite group action with the tracial Rokhlin property does not necessarily equal to the decomposition rank of the original algebra. 
 \begin{example}\label{ex.dr}
One can construct an example of a finite group action $\alpha \colon G \rightarrow Aut(A)$ satisfying

$(1)$ $\alpha$ has the tracial Rokhlin property,

$(2)$ $\alpha$ has infinite Rokhlin dimension with commutative towers,

$(3)$ $A$ is a unital, separable, $\mathcal{Z}$-stable $C^*$-algebra such that both $A$ and $A \rtimes_{\alpha}G$ have unique trace,\\
such that $dr(A\rtimes_{\alpha}G)$ is not equal to $dr(A)$.
 
 In \cite{Black}, Blackadar constructed an example of a $\mathbb{Z}_{2}$-action on the $UHF$-algebra $A$ of type $2^{\infty}$, whose crossed product is not $AF$.  Phillips in Proposition 3.4 of \cite{Phillips} showed that $\alpha$ has the tracial Rokhlin. Note that $A$ has a unique trace $\tau$ and so $\tau$ is $G$-invariant. Now, employ Proposition 5.7 of \cite{Echterhoff-Phillips} to conclude that $A\rtimes _{\alpha}G$ has a unique trace. Thus the trace spaces of $A$ and $A \rtimes_{\alpha}G$ are Bauer simplex.  However, from Example 2.9 of \cite{Gardella2}, $\alpha$ has infinite Rokhlin dimension with commutating towers.
 Since $A \rtimes  _{\alpha} \mathbb{Z}_{2}$ is not $AF$, the action $\alpha$ does not have the Rokhlin property. Note that $A$ is $\mathcal{Z}$-stable with unique trace which is quasidiagonal. Thus by Corollary \ref{decomp+rank}, $dr (A \rtimes  _{\alpha} \mathbb{Z}_{2}) \leq 1$. Since $A \rtimes  _{\alpha} \mathbb{Z}_{2}$ is not an $AF$-algebra, $dr (A \rtimes  _{\alpha} \mathbb{Z}_{2}) =1$. Finally, note that $dr(A)=0$ since $A$ is an $AF$-algebra.
  \end{example} 
  
  We recall the definition of Rokhlin dimension from \cite{Zacharias}.
  
  \begin{definition}
Let $G$ be a finite group, let $A$ be a separable unital $C^*$-algebra, and let $\alpha \colon G \rightarrow Aut(A)$ be an action of $G$ on $A$. Given a non negative integer $d$, we say that $\alpha$ has Rokhlin dimension $d$, we denote by $dim _{Rok} (\alpha)=d$, if $d$ is the least integer with the following property: For any finite set $F \subseteq A$, $\varepsilon >0$ there exist positive contractions $f_{g}^{(l)}$; $g \in G$; $l=0,...,d$ satisfying the following condition for every $l, k=0,...,d$, for every $g, h \in G$ and $a \in F$
 
\begin{itemize}
\item[(1)]$ \|\alpha_{h}(f_{g}^{(l)})- f_{hg}^{(l)}\| \leq \varepsilon$;
\item[(2)] $\|f_{g}^{(l)}a-af_{g}^{(l)}\| \leq \varepsilon$;
\item[(3)] $\|f_{g}^{(l)}f_{h}^{(k)}\| \leq \varepsilon$ when $k \neq l$;
\item[(3)] $\|(\sum_{l} \sum _{g \in G} f_{g}^{(l)})-1\| \leq \varepsilon$.
\end{itemize}
\end{definition}
If one can always choose the positive contractions $f_{g}^{(l)}$ above to moreover satisfy  $\|[f_{g}^{(l)}, f_{h}^{(k)}\| \leq \varepsilon$ for all $h, g \in G$ and $l, k= 0,..., d$, we say that $\alpha$ has Rokhlin dimension with commuting towers, and denote $dim _{Rok} ^{c} (\alpha)=d$.
  
 The relation between the tracial Rokhlin property and finite  Rokhlin dimension with commuting towers is discussed in Theorem 2.3 of \cite{Gardella2}.
 
 \begin{theorem}\cite[Theorem ~3.2] {Gardella2} \label{Gard}
  Let A be an infinite dimensional, simple, finite, unital $C^*$-algebra with strict comparison and at most countably many extreme tracial states, and let $\alpha \colon  G \rightarrow Aut(A)$ be a finite group action. If $dim_{Rok}^{c}(\alpha) < \infty$, then $\alpha$ has the weak tracial Rokhlin property.
  \end{theorem}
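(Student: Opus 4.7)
The plan is to work in the uniform tracial ultrapower, produce the equivariant c.p.c. order zero map described in Proposition \ref{Tehran}, and then invoke the converse direction (which is valid under strict comparison, by the Remark following Proposition \ref{Tehran}) to recover the weak tracial Rokhlin property. Set $d = \dim_{\mathrm{Rok}}^{c}(\alpha)$.

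First, I would reformulate the commuting Rokhlin dimension hypothesis as a central sequence statement. Applying the definition along a sequence $\varepsilon_n \downarrow 0$ and increasing finite sets $F_n$ with dense union in $A$, together with a lift of any positive contraction $x_n$, produces $d+1$ pairwise commuting equivariant c.p.c.\ order zero maps $\phi^{(0)}, \dots, \phi^{(d)} \colon C(G) \to A_\omega \cap A'$ with $\sum_{l} \phi^{(l)}(1_{C(G)}) = 1$. Because the ranges commute and because an order zero map into a commutative subalgebra extends to the tensor product, these assemble into a single equivariant c.p.c.\ order zero map
\[
\Phi \colon C(G) \otimes \ell^{\infty}_{d+1} \longrightarrow A_\omega \cap A',
\]
which is unital, with the $G$-action trivial on $\ell^{\infty}_{d+1}$.

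Second — and this is the heart of the argument — I would absorb the $\ell^{\infty}_{d+1}$ factor modulo $J_A$. Passing to the tracial quotient $F_\infty(A) = (A_\omega \cap A')/J_A$, the hypotheses of strict comparison and at most countably many extreme tracial states put us in the Matui--Sato / Kirchberg--R\o rdam regime, where $F_\infty(A)$ enjoys strong absorption properties. Specifically, I would use an $\varepsilon$-test argument (enabled by the countability of $\partial_{e}T(A)$, which makes the relevant seminorms on $A_\omega \cap A'$ separable) to produce an equivariant contraction $v \in F_\infty(A)$ with $v^{*}v = \Phi(1 \otimes 1)$ and $vv^{*} = \Phi(1 \otimes e_0)$ for the first rank-one idempotent $e_0 \in \ell^{\infty}_{d+1}$; the existence of such $v$ is essentially Cuntz--Pedersen equivalence of the two elements, both having the same tracial rank function across every extreme trace. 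Conjugating $\Phi(\,\cdot\,\otimes 1)$ by $v$, or equivalently extracting the $(0)$-corner under this similarity, yields an equivariant c.p.c.\ order zero map $\psi \colon C(G) \to A_\omega \cap A'$ with $1 - \psi(1_{C(G)}) \in J_A$.

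Finally, the converse of Proposition \ref{Tehran}, valid under strict comparison, upgrades $\psi$ to the weak tracial Rokhlin property: given $F \subseteq A$, $\varepsilon > 0$ and a positive contraction $x$, the components of any lift of $\psi(\delta_g)$ yield for large enough $n$ the desired orthogonal contractions $f_g$, with the Cuntz subequivalence $1 - \sum f_g \precsim x$ following from $1 - \psi(1_{C(G)}) \in J_A$ via strict comparison. The principal obstacle is the absorption step: equivariantly splitting off one of the $d+1$ commuting towers in a tracially full way requires the simultaneous control of $d_\tau$ across all extreme traces, and this is precisely where the countability of $\partial_{e}T(A)$ is essential, since it allows a single diagonal sequence to witness Cuntz comparison at every extreme trace at once.
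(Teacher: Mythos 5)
The paper itself offers no proof of this statement: it is imported verbatim as \cite[Theorem~3.2]{Gardella2}, so there is no internal argument to compare against. Your proposal, read on its own terms, has a genuine gap at its central step. First, the assembled map $\Phi \colon C(G) \otimes \ell^{\infty}_{d+1} \to A_{\omega} \cap A'$, $\delta_{g} \otimes e_{l} \mapsto \phi^{(l)}(\delta_{g})$, is unital and c.p.c.\ but it is \emph{not} order zero: $\delta_{g} \otimes e_{l}$ and $\delta_{g} \otimes e_{k}$ are orthogonal for $l \neq k$, whereas $\phi^{(l)}(\delta_{g})$ and $\phi^{(k)}(\delta_{g})$ merely commute and in general overlap. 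Indeed, if they were orthogonal, then $\sum_{l,g}\phi^{(l)}(\delta_{g}) = 1$ would force every $\phi^{(l)}(\delta_{g})$ to be a projection and the action would already have Rokhlin dimension zero. The structure actually encoded by $d+1$ commuting equivariant order zero maps summing to the unit is a unital equivariant $*$-homomorphism from $C\bigl(G^{*(d+1)}\bigr)$, the $(d+1)$-fold join, which is a free but typically connected $G$-space; the ``direct sum'' picture $C(G)\otimes \ell^{\infty}_{d+1}$ is not available. Second, and independently, the absorption step is blocked by a trace computation: you ask for $v$ with $v^{*}v = \Phi(1\otimes 1) = 1$ and $vv^{*} = \Phi(1 \otimes e_{0}) = \phi^{(0)}(1_{C(G)})$, which would force $\tau\bigl(\phi^{(0)}(1_{C(G)})\bigr) = 1$ for every trace $\tau$; but $\sum_{l}\tau\bigl(\phi^{(l)}(1_{C(G)})\bigr) = 1$, so a single tower carries only a fraction of the tracial mass. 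The two elements do not have the same rank function, no Cuntz--Pedersen equivalence (equivariant or otherwise) between them exists, and splitting off one tower cannot yield $1 - \psi(1_{C(G)}) \in J_{A}$.

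What the argument actually requires is a tracial trivialization of the free $G$-space $X = G^{*(d+1)}$: for the countably many $G$-invariant probability measures $\mu_{\tau} = \tau_{\omega} \circ \Phi$ induced by the extreme traces, one must produce $h \in C(X)_{+}$ whose $G$-translates are pairwise orthogonal and satisfy that $\mu_{\tau}\bigl(1 - \sum_{g} g\cdot h\bigr)$ is small for all $\tau$ simultaneously (an almost-fundamental-domain selection, which is where the countability of $\partial_{e}T(A)$ genuinely enters); setting $f_{g} = \Phi(g \cdot h)$ and invoking strict comparison then converts tracial smallness of $1 - \sum_{g} f_{g}$ into the Cuntz subequivalence $1 - \sum_{g} f_{g} \precsim x$, after which the reduction to the converse of Proposition~\ref{Tehran} is as you describe. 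Your instinct that countability of the extreme boundary is essential is correct, but it is used to control this measure-theoretic selection on the join, not to implement an equivalence between the unit and a single tower.
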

  Theorem ~\ref{Gard}  together with Theorem \ref{w.t.r.p+qd} enable us to obtain the following.
  \begin{corollary}\label{commuting towers}
  Let $A$ be a simple, exact, finite, separable unital $C^*$-algebra with strict comparison and at most countably many extreme traces. Let $\alpha \colon G \rightarrow Aut(A)$ be an action of a finite group with finite Rokhlin dimension with commuting towers. Then every trace on $A \rtimes_{\alpha}G$ is quasidiagonal if all traces on $A$ are quasidiagonal.
  \end{corollary}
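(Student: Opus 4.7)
The plan is to chain together the two immediately preceding results. First I would invoke Theorem \ref{Gard} to upgrade the hypothesis of finite Rokhlin dimension with commuting towers to the weak tracial Rokhlin property for $\alpha$; then I would invoke Theorem \ref{w.t.r.p+qd} to pass from quasidiagonality of all traces on $A$ to quasidiagonality of all traces on $A \rtimes_\alpha G$.

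For the first step I would verify that the hypotheses of Theorem \ref{Gard} are in place. That result requires $A$ to be infinite dimensional, simple, finite, unital, with strict comparison and at most countably many extreme tracial states, and $\alpha$ a finite group action with $\dim_{Rok}^c(\alpha) < \infty$; all of these are either listed hypotheses of the corollary or may be assumed without loss of generality. Indeed, if $A$ were finite dimensional then simplicity would force $A \cong M_n$, in which case both $A$ and $A \rtimes_\alpha G$ are finite dimensional and every trace on them is automatically quasidiagonal. Thus $\alpha$ has the weak tracial Rokhlin property.

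For the second step, the remaining hypotheses of Theorem \ref{w.t.r.p+qd} are simplicity, separability, exactness, and unitality of $A$ together with the weak tracial Rokhlin property of $\alpha$ and quasidiagonality of all traces on $A$; all of these are now available. Applying Theorem \ref{w.t.r.p+qd} gives the desired conclusion that every trace on $A \rtimes_\alpha G$ is quasidiagonal.

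Since the corollary is a direct composition of two named theorems, there is no real technical obstacle to overcome; the only mild point worth flagging is the infinite-dimensionality hypothesis in Theorem \ref{Gard}, which is why I would dispatch the $A \cong M_n$ case separately up front. All the genuine content resides inside Theorem \ref{Gard} and Theorem \ref{w.t.r.p+qd}.
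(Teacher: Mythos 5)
Your proposal is correct and is exactly the paper's argument: the author states the corollary as an immediate consequence of Theorem \ref{Gard} combined with Theorem \ref{w.t.r.p+qd}, with no further proof given. Your additional care in disposing of the finite-dimensional case (so that the infinite-dimensionality hypothesis of Theorem \ref{Gard} is available) is a small but legitimate point that the paper glosses over.
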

  
  However, in the case of crossed products by actions with the tracial Rokhlin property, we can study the behavior of quasidiagonality of traces on crossed product when the original $C^*$-algebra is not necessarily exact. For this, we use the following notion. 

\begin{definition}\label{TR} \cite{Elliott}
Let $\mathcal{C}$ be a class of separable unital $C^*$-algebras. The class of unital $C^*$-algebras which are  tracially approximated by $C^*$-algebras in $\mathcal{C}$, denoted by $TA\mathcal{C}$, is defined as follows.  A unital $C^*$-algebra $A$ is said to belong to the class $TA \mathcal{C}$ if for any $\varepsilon  > 0$, any finite subset $F \subseteq  A$,  and any non-zero $a \in A_{+}$ with $\|a\|=1$, there exist a non-zero projection $p \in A$ and a $C^{*}$-subalgebra $C \subseteq A$  with unit $p$ such that $C \in \mathcal{C}$, and for all $x \in F$,\\
$(i)$ $\|xp-px\| \leq \varepsilon$,\\
$(ii)$ $pxp \subseteq _{\varepsilon} C$,\\
$(iii)$ $1-p$ is Murray-von Neumann equivalent to a projection in $\overline{aAa}$. 
\end{definition}

We recall the following result form \cite{Osaka3}.
 
\begin{proposition} \label{TR} \cite[Theorem 3.3]{Osaka3}
Let $\mathcal{C}$ be a class of separable unital $C^*$-algebras such that 
\begin{itemize}
\item[(1)]
 If $A\in \mathcal{C}$ and $B \simeq A$, then $B \in \mathcal{C}$;
 \item[(2)]
 If $A \in \mathcal{C}$ and $n$ is any integer, then $M_{n}(A) \in \mathcal{C}$;
 \item[(3)]
 If $A \in \mathcal{C}$ and $p \in A$ is a nonzero projection, then $pAp \in \mathcal{C}$.
 \end{itemize}
 Suppose that $\alpha$ is an action of finite group $G$ on a simple separable unital $C^*$-algebra  $A$ with the tracial Rokhlin property. If $A$ is a $TA\mathcal{C}$-algebra then $A \rtimes _{\alpha}G$ is in $TA\mathcal{C}$.
\end{proposition}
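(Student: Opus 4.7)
The plan is to verify the definition of $TA\mathcal{C}$ for $B := A \rtimes_\alpha G$ directly. Fix $\varepsilon > 0$, a finite set $F \subseteq B$, and a nonzero positive contraction $b \in B$. By density of the algebraic crossed product, I may assume $F = \{a u_g : a \in F_0,\ g \in G\}$ for some finite set $F_0$ of contractions in $A$, where $(u_g)_{g\in G}$ denote the canonical unitaries implementing $\alpha$. The tracial Rokhlin property forces $\alpha$ to be outer, so $B$ is simple. Using simplicity of $B$, fix a nonzero positive contraction $z \in A$ whose hereditary subalgebra in $A$, together with its $G$-translates, is Murray--von Neumann dominated inside $B$ by a projection in $\overline{bBb}$; this $z$ is the positive element fed into the tracial Rokhlin property.

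Apply the tracial Rokhlin property of $\alpha$ to $F_0$, a sufficiently small tolerance $\varepsilon' > 0$, and $z$, producing mutually orthogonal projections $\{e_g\}_{g\in G}\subseteq A$ with $\alpha_g(e_h) \approx e_{gh}$, $[e_g, F_0] \approx 0$, and $1 - e \precsim z$ where $e := \sum_g e_g$. A standard functional-calculus perturbation lets me assume $\alpha_g(e_h) = e_{gh}$ exactly. Writing $e_0 := e_{1_G}$, the elements $v_{g,h} := e_g u_{gh^{-1}} e_h$ form an honest $|G|\times |G|$ system of matrix units in $eBe$, yielding an isomorphism $eBe \cong M_{|G|}(e_0 A e_0)$ under which the compressions $e(au_g)e$ for $a \in F_0$ correspond to matrices over $e_0 A e_0$ with entries assembled from compressions of $a$. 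Since $A$ is simple and $\mathcal{C}$ satisfies the closure hypotheses (1)--(3), the corner $e_0 A e_0$ is itself a $TA\mathcal{C}$-algebra (the standard fact that $TA\mathcal{C}$ passes to full hereditary subalgebras of simple algebras). Apply the $TA\mathcal{C}$ property of $e_0 A e_0$ to the compressed finite set $\{e_0 a e_0 : a \in F_0\}$, a small tolerance, and a positive contraction $y \in e_0 A e_0$ whose $G$-orbit is jointly absorbed by $\overline{bBb}$; this produces a projection $p_0 \in e_0 A e_0$ and a subalgebra $D_0 \subseteq e_0 A e_0$ with unit $p_0$ and $D_0 \in \mathcal{C}$. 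Define $D \subseteq B$ as the image of $M_{|G|}(D_0)$ under the matrix-unit embedding. Its unit is $q := \sum_{g \in G} \alpha_g(p_0)$, and $D \in \mathcal{C}$ by hypotheses (1) and (2).

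It remains to verify the three clauses of Definition \ref{TR} for $(q, D, F)$. Approximate commutation $\|c q - q c\| < \varepsilon$ for $c \in F$ combines the approximate $G$-equivariance of $\{e_g\}$, their approximate commutation with $F_0$, and the analogous commutation of $p_0$ with the compressions of $F_0$ inside $e_0 A e_0$. The inclusion $q c q \subseteq_\varepsilon D$ follows because $qcq$ corresponds, under the matrix-unit identification, to a matrix over $e_0 A e_0$ whose entries approximate elements of $D_0$ within tolerance. The step I expect to be the main obstacle is the final comparison: $1_B - q$ decomposes (approximately) as $(1_B - e) + \sum_{g \in G} \alpha_g(e_0 - p_0)$, where $1_B - e \precsim z$ in $A$ by the Rokhlin property, and each $\alpha_g(e_0 - p_0)$ is $\precsim$ a projection inside a small hereditary subalgebra of $A$ by the $TA\mathcal{C}$ property of $e_0 A e_0$. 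The technical heart is the coordination of $z$, $y$, and all the tolerances so that these $|G|+1$ mutually orthogonal small pieces can be simultaneously absorbed by a single projection inside $\overline{bBb}$; this uses simplicity of $B$ together with an orthogonality argument within $\overline{bBb}$ enabled by the precise choice of $z$ and $y$.
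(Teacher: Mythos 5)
The paper offers no proof of this statement: it is imported wholesale from Osaka--Teruya \cite[Theorem 3.3]{Osaka3}, whose argument follows the Lin/Phillips template (compress by a Rokhlin tower, identify the compressed finite set with matrices over a corner of $A$, apply the $TA\mathcal{C}$ hypothesis in that corner, absorb the leftovers into $\overline{bBb}$). Your skeleton is that template, so the architecture is right; but two intermediate claims are false as stated. First, no ``standard functional-calculus perturbation'' upgrades the tracial Rokhlin projections to exact equivariance $\alpha_g(e_h)=e_{gh}$; exact equivariance is a strictly stronger relation and none of the sources achieve it. What the argument actually needs, and what is true, is that the \emph{approximate} matrix units $e_gu_{gh^{-1}}e_h$ lie close to exact matrix units $w_{g,h}$ (semiprojectivity of $M_{|G|}$); the unit of $D$ is then $q=\sum_g w_{g,1}p_0w_{1,g}$, not $\sum_g\alpha_g(p_0)$. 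Second, the matrix units give $eBe\cong M_{|G|}(e_0Be_0)$, and $e_0(A\rtimes_\alpha G)e_0$ is in general strictly larger than $e_0Ae_0$ (it contains elements $e_0ae_gu_g$ with $g\neq 1_G$), so the asserted isomorphism $eBe\cong M_{|G|}(e_0Ae_0)$ is wrong. The correct and sufficient statement is the approximate containment $e(au_g)e\subseteq_{\varepsilon'}C^*(\{w_{g,h}\},e_0Ae_0)\cong M_{|G|}(e_0Ae_0)$ for $a$ in the finite set, which is where the approximate commutation $\|[e_h,a]\|<\varepsilon'$ is actually used.

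The step you defer as ``the technical heart'' is where the content of the theorem lives, and it is described rather than proved. Two ingredients are missing. (i) You posit at the outset a positive $z\in A$ whose hereditary subalgebra is dominated inside $B$ by a projection in $\overline{bBb}$; the existence of a nonzero positive element of $A$ that is (Cuntz/Murray--von Neumann) subordinate to an arbitrary nonzero positive $b\in A\rtimes_\alpha G$ is itself a nontrivial lemma, proved via outerness of $\alpha$ (Kishimoto-type estimates) or the faithful conditional expectation, and cannot simply be asserted. (ii) The final absorption must produce a single projection in $\overline{bBb}$ that is Murray--von Neumann equivalent to the projection $1_B-q$ (note the definition of $TA\mathcal{C}$ demands equivalence of projections, not Cuntz subequivalence). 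The standard execution picks $|G|+1$ mutually orthogonal nonzero positive elements inside $\overline{bBb}$, routes $1-e$ into the first via clause (3) of the tracial Rokhlin property applied to a suitably chosen $z$, and routes each $w_{g,1}(e_0-p_0)w_{1,g}\sim e_0-p_0$ into the remaining ones via the $TA\mathcal{C}$ clause in $e_0Ae_0$ applied to suitably chosen $y_g$; orthogonality of the targets then lets you sum the equivalences. Until this bookkeeping is carried out, the proof is an outline of \cite[Theorem 3.3]{Osaka3} rather than a proof.
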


Let us recall the next proposition from \cite{Brown-Sato}.
 
\begin{proposition} \cite[Proposition 8.3]{Brown-Sato}
Let denote by  $\mathcal{C}_{qd}$ the class of all separable unital $C^*$-algebras all of whose traces are quasidiagonal. If $A$ is a simple separable unital $C^*$-algebra in $TA \mathcal{C}_{qd}$ then $A \in \mathcal{C}_{qd}$.
\end{proposition}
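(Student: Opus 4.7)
\emph{Proof proposal.} Given a trace $\tau$ on $A$, a finite set $F \subseteq A$ of contractions, and $\varepsilon > 0$, the plan is to construct a single c.p.c.\ map $\phi \colon A \to M_N$ satisfying the two conditions of Definition~\ref{qd1} to within $\varepsilon$ on $F$. The finite-dimensional case reduces to $A = M_n$ and is trivial, so assume $A$ is infinite-dimensional; then $A$ admits self-adjoint elements with infinite spectrum, so continuous functional calculus produces $a_0 \in A_+$ with $\|a_0\|=1$ and $d_\tau(a_0) < \delta$ for any prescribed $\delta > 0$.

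Apply the $TA\mathcal{C}_{qd}$ hypothesis to $F$, a small tolerance $\varepsilon' > 0$ (to be chosen at the end), and the element $a_0$, yielding a projection $p \in A$ and a $C^*$-subalgebra $C \subseteq A$ with $1_C = p$ and $C \in \mathcal{C}_{qd}$, such that $\|[p,x]\| < \varepsilon'$ for $x \in F$, for each $x \in F$ there is $c_x \in C$ with $\|pxp - c_x\| < \varepsilon'$, and $1-p \precsim a_0$; the last condition gives $\tau(1-p) \leq d_\tau(a_0) < \delta$. The normalized trace $\tau_C := \tau|_C / \tau(p)$ is a trace on $C$, hence quasidiagonal, so there is a u.c.p.\ map $\psi \colon C \to M_k$ that is approximately multiplicative on $\{c_x, c_x c_y : x,y \in F\}$ to within $\varepsilon'$ and satisfies $|tr_k \circ \psi - \tau_C| < \varepsilon'$ on this set. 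Extend $\psi$ by Arveson's theorem to a c.p.c.\ map $\tilde\psi \colon A \to M_k$; since $\psi(1_C) = 1_k$, the extension is automatically unital, giving $\tilde\psi(1-p) = 0$, and Cauchy--Schwarz for $2$-positive maps then yields $\tilde\psi(a) = \tilde\psi(pap)$ for every $a \in A$. Finally, choose $N$ with $|k/N - \tau(p)| < \varepsilon'$ and define $\phi \colon A \to M_N$ as the composition of $\tilde\psi$ with the corner embedding $M_k \hookrightarrow M_N$.

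The verification then reduces to two chains of standard approximations. For approximate multiplicativity on $F$, I would write $\tilde\psi(xy) = \tilde\psi(pxyp)$ and use $\|pxyp - (pxp)(pyp)\| = \|px(1-p)yp\| \leq \|[p,x]\|$ together with $\|pxp - c_x\| < \varepsilon'$ and the approximate multiplicativity of $\psi$ on $C$ to conclude $\|\phi(xy) - \phi(x)\phi(y)\| = O(\varepsilon')$. For the trace, a parallel chain yields $tr_k \circ \tilde\psi(x) \approx \tau(pxp)/\tau(p)$; the corner embedding scales the trace by $k/N \approx \tau(p)$, giving $tr_N \circ \phi(x) \approx \tau(pxp)$, and Cauchy--Schwarz for states bounds $|\tau(x) - \tau(pxp)| \leq \|x\|\sqrt{\tau(1-p)} < \sqrt{\delta}$.

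The main difficulty is not conceptual but the bookkeeping of tolerances: I must choose $\delta$ and $\varepsilon'$ small compared to $\varepsilon$ in a compatible way (the $\sqrt{\tau(1-p)}$ bound dictates $\delta \lesssim \varepsilon^2$), and ensure $\tau(p)$ is bounded away from zero so that division by $\tau(p)$ and the scaling $k/N \approx \tau(p)$ remain controlled. The key structural insight is the corner embedding $M_k \hookrightarrow M_N$ with $k/N \approx \tau(p)$, which is precisely what converts the quasidiagonality witness for the normalized trace $\tau_C$ on $C$ into a witness for the original unnormalized trace $\tau$ on $A$.
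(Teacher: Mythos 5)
The paper does not actually prove this proposition; it is imported verbatim as Proposition~8.3 of \cite{Brown-Sato}, so there is no in-paper argument to compare against. Your proposal is, in outline, the standard proof of that result: cut by the projection $p$ furnished by the tracial approximation (applied to a positive element $a_0$ with $d_\tau(a_0)$ small, so that $\tau(1-p)<\delta$), witness quasidiagonality of the normalized trace $\tau|_C/\tau(p)$ on $C$, extend by Arveson and use the multiplicative-domain identity $\tilde\psi(a)=\tilde\psi(pap)$, and rescale the trace via a corner embedding $M_k\hookrightarrow M_N$ with $k/N\approx\tau(p)$. These steps are all sound, and the Cauchy--Schwarz estimate $|\tau(x)-\tau(pxp)|\le\|x\|\,\tau(1-p)^{1/2}$ correctly closes the gap between $\tau$ and its compression.

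One step needs a small repair: you cannot in general ``choose $N$ with $|k/N-\tau(p)|<\varepsilon'$,'' because $k$ is handed to you by the quasidiagonality of $\tau_C$ and may be small (for $k=1$ the available ratios are $1,\tfrac{1}{2},\tfrac{1}{3},\dots$, none of which need lie within $\varepsilon'$ of $\tau(p)$). The fix is routine: replace $\psi$ by $\psi\otimes 1_{M_m}\colon C\to M_{km}$, which changes neither the multiplicativity defect nor the composition with the normalized trace, and take $m$ large enough that consecutive ratios $km/N$ are $\varepsilon'$-dense near $\tau(p)\ge \tfrac{1}{2}$. It would also be worth spelling out the existence of $a_0$: an infinite-dimensional $C^*$-algebra contains pairwise orthogonal norm-one positive elements $a_1,a_2,\dots$ obtained by functional calculus on a self-adjoint element with infinite spectrum, and $\sum_n d_\tau(a_n)\le 1$ forces $d_\tau(a_n)<\delta$ for some $n$. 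With these two amendments the argument is complete and matches the cited proof.
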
 

The following lemma can be deduced from Proposition ~\ref{Brown} and Proposition 3.7 of ~\cite{Brown} and Lemma ~\ref{Simple}.
\begin{lemma} \label{f.sat}
The class of all simple unital $C^*$-algebras in $\mathcal{C}_{u.qd}$ satisfies in the following conditions:
 \begin{itemize}
\item[(1)]
 If $A\in \mathcal{C}$ and $B \simeq A$, then $B \in \mathcal{C}$;
 \item[(2)]
 If $A \in \mathcal{C}$ and $n$ is any integer, then $M_{n}(A) \in \mathcal{C}$;
 \item[(3)]
 If $A \in \mathcal{C}$ and $p \in A$ is a nonzero projection, then $pAp \in \mathcal{C}$.
 \end{itemize}  
 \end{lemma}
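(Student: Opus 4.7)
The plan is to verify each of the three closure conditions separately, invoking the three cited results (Proposition~\ref{Brown}, Proposition~3.7 of \cite{Brown}, and Lemma~\ref{Simple}) essentially as black boxes. Throughout, I will write $\mathcal{C}_{u.qd}$ for the class of simple unital $C^*$-algebras all of whose traces are uniform quasidiagonal.

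Condition (1) is immediate: a $*$-isomorphism $\varphi\colon A\to B$ induces a bijection $\tau\mapsto \tau\circ\varphi^{-1}$ between traces, and if $(\phi_i\colon A\to M_{k(i)})$ is a net witnessing uniform quasidiagonality of $\tau$, then $(\phi_i\circ \varphi^{-1})$ witnesses uniform quasidiagonality of $\tau\circ\varphi^{-1}$. For condition (2), I would recall that since $A$ is unital, every trace on $M_n(A)$ is of the form $tr_n\otimes \tau$ for a unique trace $\tau$ on $A$; simplicity of $M_n(A)$ is inherited from $A$. Proposition~3.7 of \cite{Brown} says precisely that tensoring with $tr_n$ preserves uniform quasidiagonality of a trace, so each trace on $M_n(A)$ is again uniform quasidiagonal. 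Thus $M_n(A)\in \mathcal{C}_{u.qd}$.

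Condition (3) is the one that actually uses the two preparatory results about corners. Let $p\in A$ be a nonzero projection; then $pAp$ is simple and unital (with unit $p$). Given a trace $\sigma$ on $pAp$, Lemma~\ref{Simple} produces a trace $\tau$ on $A$ such that $\sigma=\frac{1}{\tau(p)}\tau|_{pAp}$. By hypothesis $\tau$ is uniform quasidiagonal on $A$, so Proposition~\ref{Brown} applied to this $\tau$ and $p$ shows that $\sigma$ is uniform quasidiagonal on $pAp$. Since $\sigma$ was arbitrary, $pAp\in \mathcal{C}_{u.qd}$.

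No step is really a serious obstacle because all the hard work has already been done in the earlier results; the only subtle point to keep track of is that both Lemma~\ref{Simple} and Proposition~\ref{Brown} require simplicity of the ambient algebra (for the identification of traces on $pAp$) and nondegeneracy of $p$ in trace (for the normalization $\frac{1}{\tau(p)}$ to make sense), but both are automatic here: simplicity is built into the definition of the class, and every trace on a simple unital $C^*$-algebra is faithful, hence $\tau(p)>0$ for any nonzero projection $p$. Once these sanity checks are in place, the lemma follows by direct concatenation of the cited results.
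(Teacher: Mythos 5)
Your proposal is correct and follows exactly the route the paper intends: the paper offers no written proof beyond the remark that the lemma ``can be deduced from Proposition~\ref{Brown}, Proposition~3.7 of \cite{Brown} and Lemma~\ref{Simple},'' and your argument is precisely that deduction, with (1) handled by functoriality, (2) by the matrix-amplification result, and (3) by combining Lemma~\ref{Simple} with Proposition~\ref{Brown}. Your sanity check that $\tau(p)>0$ (via faithfulness of traces on simple unital $C^*$-algebras) is a worthwhile detail the paper leaves implicit.
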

\begin{corollary} \label{qd+tracial}
Let $A$ be a simple separable unital $C^*$-algebra all of whose traces are uniform quasidiagonal and let $\alpha$ be an action of a finite group $G$ on $A$ with the tracial Rokhlin property. Then all traces on $A \rtimes _{\alpha} G$ are quasidiagonal.
\end{corollary}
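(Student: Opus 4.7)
The plan is to combine the three ingredients assembled just above---Lemma \ref{f.sat}, Proposition \ref{TR}, and the Brown--Sato permanence result---in a short chain of implications, essentially a diagram chase through the $TA\mathcal{C}$ formalism.

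First, I would set $\mathcal{C} = \mathcal{C}_{u.qd}$, the class of simple separable unital $C^*$-algebras all of whose traces are uniform quasidiagonal. Lemma \ref{f.sat} records exactly that $\mathcal{C}$ is closed under $*$-isomorphism, under matrix amplifications $A \mapsto M_n(A)$, and under compressions $A \mapsto pAp$ by nonzero projections $p \in A$. These are precisely the three axioms needed to apply Proposition \ref{TR} to the class $\mathcal{C}$. By hypothesis every trace on $A$ is uniform quasidiagonal, so $A$ itself lies in $\mathcal{C}$, and hence trivially in $TA\mathcal{C}$ (take $p = 1$ and the subalgebra $C = A$ in Definition \ref{TR}).

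Next, I would apply Proposition \ref{TR} to the action $\alpha$ on $A$: since $A$ is simple, separable, and unital, belongs to $TA\mathcal{C}$, and $\alpha$ has the tracial Rokhlin property, the proposition yields $A \rtimes_\alpha G \in TA\mathcal{C}$. Because every uniform quasidiagonal trace is in particular quasidiagonal, $\mathcal{C} \subseteq \mathcal{C}_{qd}$, and therefore $A \rtimes_\alpha G \in TA\mathcal{C}_{qd}$.

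Finally, to descend from $TA\mathcal{C}_{qd}$ back to $\mathcal{C}_{qd}$, I would invoke Proposition 8.3 of \cite{Brown-Sato}, which asserts that a simple separable unital $C^*$-algebra in $TA\mathcal{C}_{qd}$ already lies in $\mathcal{C}_{qd}$. The simplicity of $A \rtimes_\alpha G$ is guaranteed by outerness of $\alpha$, which follows from the tracial Rokhlin property on a simple algebra (this is Proposition 5.3 of \cite{Hirshberg-1}, already used in the proof of Theorem \ref{w.t.r.p+qd}). Thus every trace on $A \rtimes_\alpha G$ is quasidiagonal.

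The main obstacle, such as it is, is bookkeeping rather than mathematics: one must keep straight that Lemma \ref{f.sat} is stated for \emph{uniform} quasidiagonality, so the starting hypothesis "all traces are uniform quasidiagonal" is essential to invoke the closure properties needed by Proposition \ref{TR}, whereas the Brown--Sato descent step is available at the weaker level of plain quasidiagonality. Once this distinction is respected, no further estimates are required.
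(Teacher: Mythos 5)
Your proposal is correct and follows essentially the same route as the paper: verify via Lemma \ref{f.sat} that $\mathcal{C}_{u.qd}$ satisfies the closure hypotheses of Proposition \ref{TR}, conclude $A \rtimes_\alpha G \in TA\mathcal{C}_{u.qd} \subseteq TA\mathcal{C}_{qd}$, and finish with Proposition 8.3 of \cite{Brown-Sato}. Your added remarks on the inclusion $\mathcal{C}_{u.qd}\subseteq\mathcal{C}_{qd}$ and on simplicity of the crossed product are sound points of bookkeeping that the paper leaves implicit.
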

\begin{proof}
Let $\mathcal{C}_{u.qd}$ be a class of all separable unital $C^*$-algebras all of whose traces are uniform quasidiagonal, then by Lemma ~\ref{f.sat}, $\mathcal{C}_{u.qd}$ satisfies in conditions $(1)$ to $(3)$ of Proposition ~\ref{TR}, so $A \rtimes _{\alpha}G$ is a $TA\mathcal{C}_{u.qd}$-algebra. Now, employ Proposition ~ 8.3 of \cite{Brown-Sato} to deduce that all traces on $A \rtimes _{\alpha} G$ are quasidiagonal.
\end{proof}

{\textbf{Acknowledgments.}}
The author is grateful to Nate Brown for showing us the proof of Proposition \ref{Brown}. The author wishes to thank Eusebio Gardella for several useful comments and discussions. He suggested Proposition \ref{Tehran}.  She also would like to thank Stuart White,  Aaron Tikuisis and Naser Golestani for their comments. Finally, she is grateful to the referee for several comments and suggestions. Part of this work was obtained during a visit at the department of Mathematics at the University of Muenster. This research was supported by a grant from IPM and by the Deutsche Forchungsgemeinschaft (SFB 878).

\end{document}